\setlist[enumerate]{topsep=0pt,itemsep=-1ex,partopsep=1ex,parsep=1ex}
\newtheorem{statement}{}[section]
\newtheorem{theorem}[statement]{Theorem}
\newtheorem{lemma}[statement]{Lemma}
\newtheorem{proposition}[statement]{Proposition}
\newtheorem{definition}[statement]{Definition}
\newtheorem{corollary}[statement]{Corollary}
\newtheorem{remark}[statement]{Remark}
\newtheorem{example}[statement]{Example}
\newcommand{\subjclass}[2][1991]{%
  \let\@oldtitle\@title%
  \gdef\@title{\@oldtitle\footnotetext{#1 \textbf{Mathematics subject classification:} #2}}%
}
\newcommand{\keywords}[1]{%
  \let\@@oldtitle\@title%
  \gdef\@title{\@@oldtitle\footnotetext{\textbf{Key words and phrases:} #1}}%
}
\def\wgro{\ensuremath \gamma} % Gamma, weighted Gromov
\def\contop{\ensuremath \nu_{\text{op}}}
\def\ppx{\ensuremath \rho(x)}
\def\ppy{\ensuremath \rho(y)}
\def\dxy{\ensuremath d(x,y)}
\def\dxz{\ensuremath d(x,z)}
\def\dyz{\ensuremath d(y,z)}
\def\rx{\ensuremath R_y(x,0)}
\def\ry{\ensuremath R_x(y,0)}
\def\rxy{\ensuremath R_0(x,y)}
\def\rxz{\ensuremath R_y(x,z)}
\def\ryz{\ensuremath R_x(y,z)}
\def\rxyz{\ensuremath R_z(x,y)}
\def\groxy{\ensuremath \gro_0(x,y)}
\def\grox{\ensuremath \gro_y(x,0)}
\def\groy{\ensuremath \gro_x(y,0)}
\def\contopxy{\ensuremath \contop(x,y)}
\def\contopx{\ensuremath \contop(x,0)}
\def\contopy{\ensuremath \contop(y,0)}
\def\contopxz{\ensuremath \contop(x,z)}
\def\wgrox{\ensuremath \gamma(x,0)} 
\def\wgroy{\ensuremath \gamma(y,0)} 
\def\wgroxy{\ensuremath \gamma(x,y)}
\def\face{\ensuremath\mathfrak{F}}
\newcommand{\free}[1]{\mathcal{F}(#1)} %Chr: Norma 2 barras rellena
\def\lip{ \text{Lip}}
\def\gro{ \ensuremath G }
\def\NN{\Bbb N}
\def\IN{\hbox{{\rm I}\kern-.13em{\rm N}}}
\def\RR{\mathbb{R}}
\def\IR{\hbox{{\rm I}\kern-.13em{\rm R}}}
\def\sp{\hbox{{\rm span}}}
\def\csp{\overline{\sp}}
\def\aa{\alpha}
\def\dd{\delta}
\def\pp{\rho}
\def\eor{\hfill{\circledR}}
\def\Ext{\hbox{\rm{Ext} }}
\def\la{\langle}
\def\ra{\rangle}
\title{The numerical index of $2$-dimensional Lipschitz-free spaces}
\subjclass[\textbf{2020}]{46B20, %Geometry and structure of normed linear spaces 
47A12, %numeircal range, numerical radius
54E35 %Metric spaces, metrizability
}
\keywords{Lipschitz-free, numerical index, numerical radius, geometry of Banach spaces.}
\author{Ch. Cobollo\footnote{Christian Cobollo. %Instituto Universitario de Matem\'atica Pura y Aplicada. Universitat Polit\`ecnica de Val\`encia (Spain).
Email: chcogo@upv.es. Corresponding author.
%Supported in part by Generalitat Valenciana (through Project PROMETEU/2021/070 and the predoctoral fellowship CIACIF/2021/378), by MCIN/AEI/10.13039/501100011033 (through Project PID2019-105011GB), by MCIN/AEI/10.13039/501100011033 and “ERDF A way of making Europe” (Grant PID2021-122126NB-C33) and the Universitat Polit\`ecnica de Val\`encia (Spain).
}
,
A. J. Guirao\footnote{Antonio José Guirao. %Instituto Universitario de Matem\'atica Pura y Aplicada. Universitat Polit\`ecnica de Val\`encia (Spain).
Email: anguisa2@mat.upv.es.
%Supported in part by Fundación Séneca, Región de Murcia (Grant 19368/PI/14), by MCIN/AEI/10.13039/501100011033 and “ERDF A way of making Europe” (Grant PID2021-122126NB-C33) and the Universitat Polit\`ecnica de Val\`encia (Spain).
}
,
and V. Montesinos\footnote{Vicente Montesinos. %Instituto Universitario de Matem\'atica Pura y Aplicada. Universitat Polit\`ecnica de Val\`encia (Spain).
Email: vmontesinos@mat.upv.es.
%Supported in part by Projects MTM2017-83262-C2-2-P and PID2021-122126NB-C33/MCIN/AEI/10.13039/501100011033 (FEDER) (Spain), and the Universitat Polit\`ecnica de Val\`encia (Spain).
}
}
\affil{\textit{Instituto Universitario de Matem\'atica Pura y Aplicada. Universitat Polit\`ecnica de Val\`encia (Spain).}}
\date{}
\begin{document}
\normalem
\maketitle
\begin{abstract}
%We provide the formula to compute  $n(\free{M})$, where $\free{M}$ is any 2-dimensional Lipschitz-free space, 
We provide the explicit formula for the numerical index of any $2$-dimensional Lipschitz-free space, also giving the construction of operators attaining this value as its numerical radius. As a consequence, the numerical index of $2$-dimensional Lipschitz-free spaces can take any value of the interval $[\frac{1}{2},1]$, and this whole range of numerical indices can be attained by taking $2$-dimensional subspaces of any Lipschitz-free space of the form $\free{A}$, where $A\subset \RR^n$ with $n\geq 2$ is any set with non-empty interior.

%We prove that the numerical index of a 2-dimensional Lipschitz-free space can take any value in the interval $[\frac{1}{2},1]$, providing an explicit formula for its assessment and also giving explicit constructions operators attaining the minimum numerical radius. We include applications to particular cases and consequences for infinite dimensional Lipschitz-free Banach spaces.

%Throughout a purely geometrical argument, we provide the explicit formula to compute the numerical index of every 2-dimensional Lipschitz-free space. As a consequence, it is deduced that for this class of Banach spaces, the numerical index can exclusively take any value on the interval $[\frac{1}{2},1]$. Moreover, it is proved that this whole range of numerical indices can be found by taking 2-dimensional subspaces of any separable Lipschitz-free space of the form $\free{A}$, where $A\subset \RR^n$ with $n\geq 2$ is any set with non-empty interior.
\end{abstract}

\section{Introduction}
 The numerical index of a given Banach space is a constant relating the geometry of the norm and the numerical radius of bounded linear operators on the space. The concept appeared in the literature for the first time in 1970 in the seminal paper \cite{DMPW70}, after being introduced by G. Lumer two years before. The classical reference on this topic are the two monographs by F. Bonsall and J. Duncan, \cite{BonDun2} and \cite{BonDun1}. From there, a good overview of the current state of the topic can be provided through the survey \cite{survey-ind-num} together with the first chapter of the recent book \cite{spear-op}.
 
 This concept has been widely studied, and although the numerical index of certain classes of Banach spaces---like $L_1(\mu)$ or $C(K)$---are well known due to their concrete properties, finding the numerical index is still a challenging task, even for $2$-dimensional Banach spaces. For instance, there is a full line of recent papers devoted to the study of the numerical index of $2$-dimensional $l_p$ spaces, see  \cite{MarMer09}, \cite{MerQue21}, \cite{MerQue23} or \cite{MonZhe22}. Also, there are papers studying the numerical index for some $2$ or $3$-dimensional Banach spaces whose unit ball has a particular shape---see \cite{MarMer07} and \cite{3D-num-ind}.

Along this work, we intend to begin with the study of the numerical radius of Lipschitz-free Banach spaces, starting from the $2$-dimensional case. The origin of this class of Banach spaces---also known by other names through the literature, like Arens--Eells spaces or Transportation Cost spaces---can be tracked to classical authors like Kantorovich and Rubinstein \cite{Kant-Rub} while approaching optimal transport problems. They were rediscovered many times from different perspectives by authors like Arens, de Leeuw, Eells, or Johnson---see \cite{Ar-Ee}, \cite{Johnson}, and \cite{deLeeuw}. Meanwhile, Kadets \cite{Kadets} and Pestov \cite{Pestov} provided early versions of what nowadays is known as the universal property, as Lipschitz-free Banach space are the free object between the categories of metric spaces (with the morphisms of Lipschitz maps) and Banach spaces (with the morphisms of linear bounded operators). Its systematic study as Banach spaces started in 1999 with the first edition of the monograph \cite{Weaver1} by Weaver, whose recent second edition \cite{Weaver2} has taken its place as the main reference in the topic. Concerning the area of Geometry of Banach spaces, the seminal paper \cite{GoKa03} by Godefroy and Kalton in 2003 started the study of Lipschitz-free spaces from this point of view, providing some remarkable results, like the preservation of the bounded approximation property by Lipschitz-homeomorphisms, or the linearization of metric embeddings of a separable Banach space $X$ inside a Banach space $Y$.

As the norm of these spaces is fundamentally described as a predual norm, even nowadays, the geometry of these spaces is still under study, and it is unknown in so many aspects, being a very active field of research in the last years. For instance, it is still an open problem to determine if, in every Lipschitz-free space, every extreme point is a molecule (defined below).

\medskip

The following is the main result of the paper:
\begin{theorem}\label{thm:num-ind-2D}
   Let $M=(\{x,y,z\},d)$ a metric space of three points. Assume (without loss of generality) that $\dxy \geq \dxz \geq \dyz$. Then:
   \begin{itemize}
      \item if any triangular inequality on $M$ is satisfied as an equality, then ${n(\free{M})=1}$;
      \item if all triangular inequalities on $M$ are strict, then 
      \begin{equation*}        
         n(\free{M})= \max \bigg\{  \frac{\dxz \gro_y(x,z)}{\dxy \gro_z(x,y)+ \dyz\gro_x(y,z)  },  \frac{\dxz}{\dxy+\dyz}\bigg\},
      \end{equation*}
       where $\gro_z(x,y)$ is the Gromov product of $x$ and $y$ w.r.t. $z$.
   \end{itemize}
\end{theorem}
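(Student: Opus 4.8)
The plan is to turn the statement into an explicit two-dimensional computation. Fixing the base point at $z$, I would identify $\L$ with $\RR^2$ through $f\mapsto(f(x),f(y))$; its unit ball is then the polytope $\{(s,t):|s|\le \dxz,\ |t|\le \dyz,\ |s-t|\le \dxy\}$, and since $n(\free{M})=n(\L)$ (the numerical index is self-dual in finite dimensions) it suffices to work with this ball. Writing $a=\dxy$, $b=\dxz$, $c=\dyz$ with $a\ge b\ge c$, the constraint $|s-t|\le a$ is active precisely when $\gro_z(x,y)=\tfrac12(b+c-a)>0$. Hence, under this ordering, a triangle inequality is an equality exactly when $a=b+c$, and in that case the ball degenerates to the rectangle $[-b,b]\times[-c,c]$; thus $\L\equiv\ell_\infty^2$ and $\free{M}\equiv\ell_1^2$, both of numerical index $1$, which settles the first bullet. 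When all inequalities are strict the ball is a centrally symmetric hexagon, whose six vertices $V_1,\dots,V_6$ and the supporting functionals $f_1,\dots,f_6$ of its six edges I would record in closed form in $a,b,c$.

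The next step is a finite description of the numerical radius. For a polygonal plane the set of states $\{(w,w^*):\|w\|=\|w^*\|=1,\ w^*(w)=1\}$ splits into edge-pieces (one functional paired with an entire edge) and vertex-pieces (a vertex paired with the arc of functionals supporting it); on each piece $w^*(Tw)$ is affine, so its modulus is attained at an endpoint. This gives, for every operator $T$,
\[
v(T)=\max_{E_i\ni V_j}\,|f_i(TV_j)|,
\]
the maximum running over the incident vertex-edge pairs. Central symmetry of the hexagon reduces this to six explicit quantities, each linear in the four entries of the matrix of $T$, while $\|T\|=\max_j\|TV_j\|$ is governed by the same vertices.

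With these tools the numerical index becomes $n=\inf\{v(T):\|T\|=1\}$, equivalently $1/n=\sup\{\|T\|:v(T)\le1\}$, a piecewise-linear optimization in the entries of $T$. I would establish the two bounds separately. For the lower bound I would show that \emph{every} $T$ satisfies both $v(T)\ge A\,\|T\|$ and $v(T)\ge B\,\|T\|$, where $A$ and $B$ are the first and second terms of the maximum and $B=\dxz/(\dxy+\dyz)$; each such inequality would come from a suitable nonnegative combination of the six incidence quantities whose coefficients are forced by the Gromov-product identities among $a,b,c$. This yields $n\ge\max\{A,B\}$. For the matching upper bound and the attainment asserted in the statement I would exhibit two extremal operators---a ``rotation-type'' map realizing the ratio $A$ and a simpler map realizing $B$---so that in each regime the dominant value is actually achieved; these are the operators attaining their value as numerical radius.

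The main obstacle is the lower bound. Since the hexagon has no reflection symmetry once $a,b,c$ are distinct, its linear isometry group is only $\{\pm I\}$, so $T$ cannot be reduced to a normal form and all four parameters survive the optimization. The delicate point is to find the right certificates---the precise combinations of the six $|f_i(TV_j)|$ that reproduce $A\,\|T\|$ and $B\,\|T\|$---and to check that these two regimes exhaust the extremal behaviour, so that the infimum equals their maximum rather than being dictated by some third vertex-edge configuration. I expect the Gromov-product bookkeeping needed to identify those combinations to be the heaviest part of the argument.
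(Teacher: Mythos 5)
Your reduction is sound as far as it goes: since $\free{M}$ is finite-dimensional, $n(\free{M})=n(\free{M}^*)$ is legitimate, the dual ball is the polytope you describe, the degenerate case $\dxy=\dxz+\dyz$ does collapse it to a rectangle and give numerical index $1$, and for a polygonal plane $v(T)$ is indeed the maximum of $|f_i(TV_j)|$ over incident vertex--edge pairs while $\|T\|$ is read off the vertices. (The harmless factor $\tfrac12$ in your Gromov product cancels in the quotient.) Structurally this is the same architecture as the paper's argument, which works on the $\free{M}$ side with the molecules $m_{x,y},m_{x,z},m_{y,z}$ as the vertices and the three face functionals as the $f_i$, proves a lower bound valid for every norm-one operator, and then exhibits extremal operators.

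The gap is that the proposal stops exactly where the proof begins: neither the lower bound nor the upper bound is actually carried out, and you defer both explicitly. The lower bound cannot be a single fixed nonnegative combination of the six incidence quantities --- the absolute values make it a disjunction of linear inequalities --- and what is really needed is a two-layer case analysis. First, for each vertex $V$ that $T$ sends to the sphere one shows $v(T)\ge\max\{\contop(V),R(V)\}$, where $\contop(V)$ is the quantity $A$ attached to that side (obtained by optimizing the landing point of $TV$ inside the opposite face, with an explicit optimal convex coefficient $\lambda$) and $R(V)$ is the corresponding metric ratio (obtained by a sign analysis of one of the evaluations $\langle f_i,TV'\rangle$ for a \emph{different} vertex $V'$). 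Second, one needs a chain of ordering lemmas among these six quantities --- governed by the ordering of the side lengths and by the threshold $\rho(x)\lessgtr\frac{\dxy^2+\rho(y)^2}{\dxy+\rho(y)}$ --- to show that the minimum over the three vertices is exactly $\max\{A,B\}$ with both $A$ and $B$ attached to the \emph{middle} side $\dxz$; this is where the specific form of the stated formula comes from, and nothing in the proposal identifies it. Likewise the upper bound requires two different explicit operators, one per regime, each of which must be checked to be norm-one (the image of the third vertex must land in the ball) and to contribute no more than the claimed value at all three vertices. Since all of this is precisely the content you label as ``the heaviest part'' and leave undone, what you have is a correct and workable plan, not a proof.
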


In particular, for a metric space of three points, the numerical index $1$ is only attained at the degenerated case---see Corollary \ref{cor:aligned}---and $n(\free{M})=\frac{1}{2}$ if and only if $M$ is an equilateral triangle---see Corollary \ref{cor:equil}. Moreover, simpler formulae are obtained for the isosceles case---see Corollaries \ref{cor:long-isos} and \ref{cor:fat-isos}---which allowed us to recover \cite[Theorem 1]{MarMer07} as a particular case.

Aside from this introduction, this article contains $4$ sections structured as follows. First, the remaining part of this section contains two subsections providing the necessary preliminaries and definitions concerning numerical index and Lipschitz-free Banach spaces. We encourage experts in the field to skip this part. Section \ref{sec:metric-tools} is fully devoted to developing some tools and results in metric spaces $M$ that will be needed to describe the geometric behavior of the numerical radius in $\free{M}$. In Section \ref{sec:num-rad-behaviour}, we begin the work in $2$-dimensional Lipschtiz-free spaces, getting the first results and estimations on the numerical radius of norm-one operators, deducing that the maximum appearing in Theorem \ref{thm:num-ind-2D} above works as a lower bound for the numerical index of $\free{M}$. In Section \ref{sec:const-op}, we will prove that the lower bound obtained in the previous section is indeed the exact formula to compute the numerical index of a $2$-dimensional $\free{M}$, by constructing operators that reach exactly this value as their numerical radius. Some applications, particular cases, and consequences on infinite dimensional Lipschitz-free spaces will be given in Section \ref{sec:app}. From here, we highlight the fact that the numerical index of a $2$-dimensional Lipschitz-free space can take any value of the interval $[\frac{1}{2},1]$.
%, and that if $n\geq2$ and $A\subset \RR^n$ has non-empty interior, the separable infinite-dimensional Lipschitz-free space $\free{A}$  contains a 2-dimensional subspace whose numerical index is $\aa$, for any $\aa\in [\frac{1}{2},1]$ (see Theorem \ref{thm:R2-contains-all}). 

\subsection*{Notation and basic Banach space notions}
Throughout this note, $X$ will denote a real Banach space, while its unit ball and sphere will be denoted by $B_{X}$ and $S_X$, respectively. Its dual space will be denoted by $X^*$, and given $x\in X$ and a linear continuous functional $x^*\in X^*$, we will write indistinctly $\la x,x^*\ra $ or $\la x^*,x\ra$ to refer to the evaluation $x^*(x)$. We will use $\mathcal{L}(X)$ to refer to the space of linear bounded operators from the Banach space $X$ into itself.

Given $X$ a real Banach space and $T\in \mathcal{L}(X)$, the \textit{numerical radius} of $T$ is defined as
\begin{equation*}
      v(T):=\sup\{ |\la x^*,Tx\ra| : x\in S_X, x^* \in S_{X^*}, \la x,x^*\ra=1 \}.
\end{equation*}
The numerical radius defines a seminorm in $\mathcal{L}(X)$ satisfying $v(T)\leq \|T\|$ for every $T \in \mathcal{L}(X)$. It may define an equivalent norm on $\mathcal{L}(\free{M})$, and this happens if and only if the equation 
\begin{equation*}
     C\|T\| \leq v(T)\leq \|T\|
\end{equation*}
is satisfied for a constant $0<C\leq 1$. 
  
The numerical index of $X$, denoted by $n(X)$, is the greatest constant $C\geq 0$ satisfying the equation above. Thus, if $X$ is a real Banach space, $n(X)$ belongs to $[0,1]$. The numerical index can equivalently be defined as
\begin{equation*}
      n(X)=\inf\{ v(T): T\in S_{\mathcal{L}(X)} \}.
\end{equation*}

As our working context is finite-dimensional Banach spaces, notice that it is enough to evaluate on the extreme points, i.e.,
\begin{equation}\label{eq:num-rad-finite}
      v(T):=\sup\{ |\la x^*,Tx\ra| : x\in \Ext{B_X}, x^* \in \Ext{B_{X^*}}, \la x,x^*\ra=1 \}.
\end{equation}

\subsection*{Lipschitz-free preliminaries}

%Even though we encourage to the reader to visit some some of the classical references below to find a proper introduction on this spaces, here we provide a very brief summary on some of its basic properties and tools needed in order to develop this work.
For the sake of completeness, here we provide a very brief summary of some of the basic properties and tools needed to work with Lipschitz-free spaces. 
  
To introduce its definition, let $(M,d)$ be a metric space. By choosing a distinguished point $0_M\in M$ (usually called just $0$ when there is no possibility of confusion), we can take $\lip_0(M)$ as the linear space of Lipschitz functions $f\colon M\to \RR$ that vanish at $0$, which is a real Banach space when endowed with the norm of the Lipschitz constant, $\|\cdot\|_{\text{Lip}}$, defined by
\begin{equation*}
  \|f\|_{\text{Lip}}:= \sup_{x\neq y\in M} \dfrac{|f(x)-f(y)|}{d(x,y)}.
\end{equation*}

The choice of the element $0$ can be arbitrary since by taking $0'\neq 0$, $\lip_0(M)$ and $\lip_{0'}(M)$ are isometrically isomorphic as real Banach spaces. In order to simplify the notation, once the $0\in M$ is fixed, we will use the function $\pp(\cdot):= d(\cdot,0)$. 
The map $\dd: M \to \lip_0(M)^*$ that sends every point $x\in M$ to its evaluation functional $\dd_x$ is an isometry, and it is easy to see that $\dd_x$ and $\dd_y$ with $x \neq y$ are linearly independent. Notice that $\csp \{ \dd (x) : x\in M \}$ is a closed linear subspace of $\lip_0(M)^*$, and it is a predual of $\lip_0(M)$, denoted by $\free{M}$, i.e.,
\[ \free{M}= \csp\{\dd_x : x\in M\} \ \ ( \subset \lip_0(M)^* ).\]

Roughly speaking, we can think of $\free{M}$ as the Banach space constructed by taking $M$ and providing it with a linear structure in which distinct points in $M$ are now linearly independent, and the endowed norm is the one keeping the original metric structure of $M$. Thus, $\|\dd_x\|= \ppx$, or more generally, $\|\dd_x-\dd_y\|=\dxy$.
  
As the linear continuous functionals on $\free{M}$ are the elements of $\lip_0(M)$, it is clear how they act. Let $g\in \lip_0(M)$ and $\sum_i^n a_i \dd_{x_i}\in \free{M}$, then
\[\la g, \sum_i^n a_i \dd_{x_i} \ra= \sum_i^n a_i \la g,   \dd_{x_i} \ra= \sum_i^n a_i \dd_{g(x_i)}.\]
  
Although the space may be fully described using \emph{the deltas}, the following notion provides some advantages. Given $x,y \in M$ two distinct points in $M$, its associated \textbf{molecule} (some authors call this \textit{elemental molecule}) is
 \[m_{x,y}:= \frac{\dd_x -\dd_y}{d(x,y)}\in \free{M}.   \]
  
As, for every $x\in M$, ${\dd_x}={\ppx}\cdot m_{x,0}$, $\free{M}= \csp\{m_{x,y}: x,y \in M\}$. The reader will rapidly understand the relevance of these elements on $\free{M}$ since they are not only norm-one elements but, as $\la g, m_{x,y} \ra= \frac{g(x)-g(y)}{d(x,y)}$, they constitute a $1$-norming subset of $\free{M}$ for the norm $\|\cdot\|_{\lip}$. 
  
  Moreover, in the finite-dimensional case, all extreme points of the unit ball of $\free{M}$ are molecules (see, for instance, \cite{AlGui19}), although not every molecule should be an extreme point. Thus, even though the explicit computation of the norm of a finite-dimensional Lipschitz-free is far from being an easy task, one may represent its unit ball as the convex hull of all the molecules. By determining the extreme molecules we can also think of the faces of $B_{\free{M}}$ as convex hulls of some of these molecules.
  
  \begin{example}\label{ex:free-repres}
  \rm
  Consider $M=(\{x,y,0\},d)$. Then, $m_{x,0}$ and $m_{y,0}$ are norm-one vectors which are linearly independent, while 
  \begin{equation*}
      m_{x,y}=\frac{\ppx}{\dxy}m_{x,0} -\frac{\ppy}{\dxy}m_{y,0}.
  \end{equation*} 
  
  As Figure \ref{fig:free-repres} illustrates, the unit ball of the $2$-dimensional Lipschitz-free may be understood as a modification of the usual $\|\cdot\|_1$ norm in $\RR^2$. Indeed, we can interpret this as the norm in $\RR^2$ such that the unit ball is the absolute convex hull of the vectors $\{e_1,e_2, \frac{\ppx}{\dxy}e_1 -\frac{\ppy}{\dxy}e_2 \}$.
  \begin{figure}[htb!]
\begin{center}
\includegraphics[width=4in]{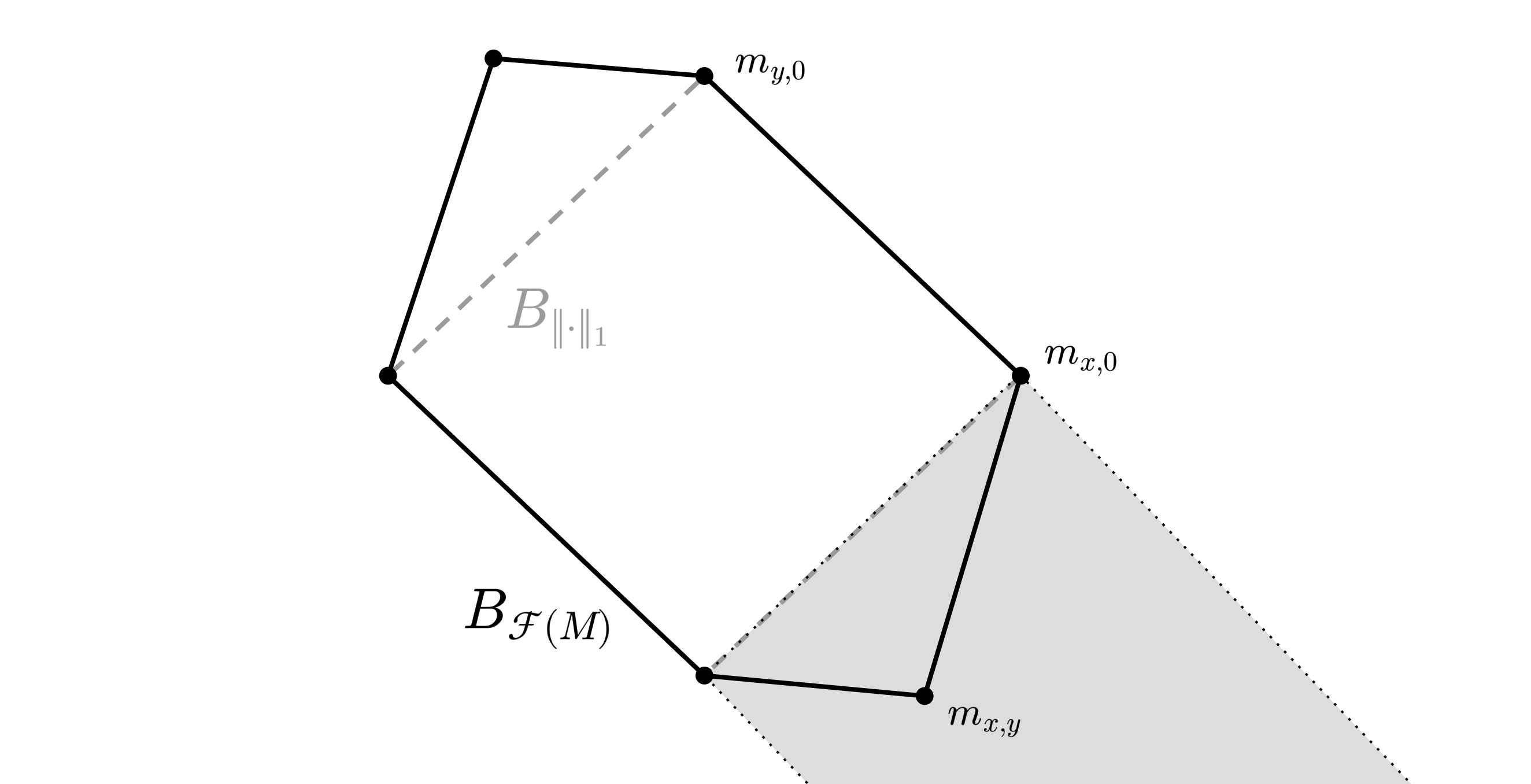}
\caption{Representation of $B_{\free{M}}$. The shaded area is where the molecule $m_{x,y}$ may be located, depending on $M$.}
\label{fig:free-repres}
\end{center}
\end{figure}

As Figure \ref{fig:free-repres} illustrates, given $M=(\{x,y,0\},d)$, the unit ball of $\free{M}$ will be represented as a specific hexagon, although depending on the location of the molecule $m_{x,y}$, it may be a rhombus. This happens exactly when $3$ molecules are \textit{aligned} (i.e. when a molecule is in the convex hull of the other two), and in the sections below we indicate a way to characterize this behaviour. When this happens, $\free{M}$ is isometrically isomorphic to ${(\RR^2, \|\cdot\|_1)}$.
\end{example}

%%%%%%%%%%%%%%%%%%%%%%%%%%%%%%%%%%%%%%%%%%%%%%%%%%%%%%%%%%%%%%%%%%%%%%%%%%%%%%
%%%%%%%%%%%%%%%%%%%%%%%%%%%%%%%%%%%%%%%%%%%%%%%%%%%%%%%%%%%%%%%%%%%%%%%%%%%%%%
%%%%%%%%%%%%%%%%%%%%%%%%%%%%%%%%%%%%%%%%%%%%%%%%%%%%%%%%%%%%%%%%%%%%%%%%%%%%%%

\section{Metric Tools}\label{sec:metric-tools}
Along this section, we will define some constants on the metric space $M$ and state some of its properties that will be used later in order to work with the geometric structure in $\free{M}$ and to bring forth some geometric arguments to estimate the numerical index in $\free{M}$. To the best of our knowledge, some notions are introduced as a part of this work, while other ones, as easy computations on a metric space, may appear in other parts of the literature, maybe with other names. We will state its definitions and provide some properties for the sake of completeness. 
  
Before starting, it is worth remarking that, even though there is a need of having a distinguished point $0$ in $M$ to construct $\free{M}$, this element will play no special role in any part of the work. This is not only because the isometric structure of $\free{M}$ is independent of the choice of the distinguished element, but also because in any formula developed in this work, in any concept associated with a pair of points $x,y\in M$ or to a molecule $m_{x,y}\in \free{M}$, the computations are exactly the same either if one of the points $x$ or $y$ is the distinguished element in $M$ or not. To stress this fact, generally, we will put $M=(\{x,y,z\},d)$, without explicitly identifying the distinguished element. However, to make some deductions, sometimes we will need to assume (without loss of generality) any order in the distances. As the points in the metric space can be renamed as we please, we will assume the order ${\dxy \geq \dxz \geq \dyz }$, and just in this scenario, as a way to ease the notation, we will put $z=0$. In this case, we will write $M=(\{x,y,0\},d)$ and the order will be $\dxy \geq \ppx \geq \ppy$, where $\pp(\cdot):= d(\cdot,0)$.

Let $(M,d)$ be a metric space and $x,y,z\in M$ a triplet of points. We say that $z$ \textit{lies in the metric segment between} $x$ and $y$ if $d(x,y)= d(x,z)+ d(z,y)$. If there is a triplet of points $x,y,z\in M$ such that one of them lies in the metric segment of the other two, we may just say that the three points are \textit{(metrically) aligned}. The following notion is well known and works as a tool to detect this behavior in the metric space $M$.
 \begin{definition}
 Let $(M,d)$ be a metric space. Given distinct $x$, $y$  and $z\in M$, the \textbf{Gromov product} of $x$ and $y$ with respect to $z$ is 
 \begin{equation*}
     \gro_z(x,y):= d(x,z)+d(y,z)-d(x,y).
 \end{equation*}
 \end{definition}
 
This expression comes from the triangular inequality $d(x,z)+d(y,z) \geq d(x,y)$, so Gromov products are always non-negative. Roughly speaking, the Gromov product of $x$ and $y$ with respect to $z$ measures how far is $z$ from being in the metric segment between $x$ and $y$, which happens if and only if $ \gro_z(x,y)=0$. This allows us to provide a general definition of being a \textit{non-aligned} metric space.
 
 \begin{definition}\label{def:non-aligned}
 A given metric space $(M,d)$ will be called \textbf{non-aligned} if every Gromov product is strictly positive. A non-aligned metric space $M$ with three elements will be called \textbf{triangle} through this paper. 
 %we will say that it is \textbf{non-aligned} if there is no triplet $x,y,z\in M$ of distinct points such that ${d(x,y)=d(x,z)+ d(y,z)}$. Equivalently, a metric space is non-aligned if and only if every Gromov product is strictly positive. If $M$ has just three points, we will call \textbf{triangle} to such a metric space. 
 \end{definition}

The following lemma is straightforward but will be of use.
%Even being easy, we include the following result for further reference to order the Gromov products.
\begin{lemma}\label{lemma:grom-order}
Let $M=(\{x,y,z\},d)$ be a triangle. Assume that $d(x,y)\geq d(x,z)\geq d(y,z)$. Then $\gro_z(x,y)\leq \gro_y(x,z)\leq \gro_x(y,z)$.
\end{lemma}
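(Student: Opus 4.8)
The plan is to unwind the definition of the Gromov product and reduce each of the two claimed inequalities to a single comparison of side lengths, which is then immediate from the hypothesis $d(x,y) \geq d(x,z) \geq d(y,z)$. To keep the bookkeeping transparent, I would first abbreviate the three distances, writing $a := d(x,y)$, $b := d(x,z)$ and $c := d(y,z)$, so that the hypothesis reads $a \geq b \geq c$ and the three Gromov products in play become
\begin{equation*}
\gro_z(x,y) = b + c - a, \qquad \gro_y(x,z) = a + c - b, \qquad \gro_x(y,z) = a + b - c.
\end{equation*}

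Next I would establish the two inequalities separately. For the first, $\gro_z(x,y) \leq \gro_y(x,z)$, I would simply subtract the two expressions: their difference is $(a+c-b)-(b+c-a) = 2(a-b)$, which is non-negative precisely because $a \geq b$. For the second, $\gro_y(x,z) \leq \gro_x(y,z)$, the difference is $(a+b-c)-(a+c-b) = 2(b-c) \geq 0$ since $b \geq c$. Chaining the two comparisons then yields the full ordering.

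The key observation is that each comparison depends on only two of the three distances: comparing $\gro_z(x,y)$ with $\gro_y(x,z)$ cancels the common term $c$ and leaves only $a$ against $b$, while comparing $\gro_y(x,z)$ with $\gro_x(y,z)$ cancels the common term $a$ and leaves $b$ against $c$. There is no genuine obstacle here --- the statement is a one-line consequence of the definition once the distances are ordered --- so the only thing to be careful about is matching the subscripts in each Gromov product to the correct side of the triangle. It is worth noting that the triangle (non-alignment) hypothesis is not actually needed for the ordering itself; it only serves to guarantee that all three products are strictly positive.
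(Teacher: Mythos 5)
Your proof is correct: expanding the three Gromov products and subtracting pairwise reduces each inequality to $a\geq b$ and $b\geq c$ respectively, which is exactly the routine computation the paper has in mind when it declares the lemma ``straightforward'' and omits the proof. Your closing remark that the non-alignment hypothesis is not needed for the ordering is also accurate.
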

%\begin{proof}
% It is easy to see that
% \begin{equation*}
%     \begin{split}
%         \gro_z(x,y)&= d(x,z)+ d(y,z)+ d(x,y)- 2 d(x,y)\\
%          \gro_y(x,z)&= d(x,z)+ d(y,z)+ d(x,y)- 2 d(x,z)\\
%           \gro_x(y,z)&= d(x,z)+ d(y,z)+ d(x,y)- 2 d(y,z).
%     \end{split}
% \end{equation*}
% 
% A straightforward comparison leads to the desired result.
%\end{proof}

%We will also introduce the \textit{weighted Gromov product} as a key tool for later.
We introduce a related notion that will be useful to simplify computations.

\begin{definition}
Let $(M,d)$ be a metric space. Given distinct $x$, $y$  and $z\in M$, the \textbf{weighted Gromov product} of $x$ and $y$ with respect to $z$ is defined as
\begin{equation*}
    \wgro_z(x,y):= d(x,y)\gro_z(x,y).
\end{equation*}
\end{definition}

%Weighted Gromovs products, as Gromov products, have inverse order with respect to the distances.
 
\begin{lemma}\label{lemma:wgro-order}
Let $M=(\{x,y,z\},d)$ be a triangle. Then, $\wgro_z(x,y)\leq \wgro_y(x,z)$ if and only if $d(x,y)\geq d(x,z)$.
\end{lemma}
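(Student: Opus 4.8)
The plan is to prove the statement by a direct algebraic computation, since both sides are explicit polynomial expressions in the three pairwise distances. First I would abbreviate $a := \dxy$, $b := \dxz$, and $c := \dyz$ to keep the bookkeeping manageable. Expanding the definitions of the weighted Gromov products gives $\wgro_z(x,y) = \dxy\,\gro_z(x,y) = a(b+c-a)$ and $\wgro_y(x,z) = \dxz\,\gro_y(x,z) = b(a+c-b)$.

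The key step is then to compute the difference of these two quantities and factor it. Subtracting, I obtain
\[
\wgro_y(x,z) - \wgro_z(x,y) = (ab + bc - b^2) - (ab + ac - a^2) = (a^2 - b^2) - c(a-b) = (a-b)(a+b-c).
\]
Recognizing that $a + b - c = \dxy + \dxz - \dyz = \gro_x(y,z)$, this can be rewritten intrinsically as
\[
\wgro_y(x,z) - \wgro_z(x,y) = (\dxy - \dxz)\,\gro_x(y,z).
\]

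Finally, since $M$ is a triangle, Definition \ref{def:non-aligned} guarantees that every Gromov product is strictly positive, so in particular $\gro_x(y,z) > 0$. Hence the sign of the difference $\wgro_y(x,z) - \wgro_z(x,y)$ coincides exactly with the sign of the factor $\dxy - \dxz$, which yields the claimed equivalence: $\wgro_z(x,y) \leq \wgro_y(x,z)$ if and only if $\dxy \geq \dxz$.

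I do not anticipate any genuine obstacle here, as the entire argument reduces to a one-line factorization followed by the positivity of a Gromov product. The only point requiring a little care is the role of the non-degeneracy hypothesis: it is precisely the \emph{triangle} assumption that forces $\gro_x(y,z) \neq 0$, and this strict positivity is what upgrades the argument from a single implication to a full equivalence.
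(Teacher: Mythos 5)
Your computation is correct and is exactly the "expanding and factorizing" that the paper's proof leaves to the reader: the factorization $\wgro_y(x,z)-\wgro_z(x,y)=(\dxy-\dxz)\,\gro_x(y,z)$ together with the strict positivity of $\gro_x(y,z)$ for a triangle gives the equivalence. Nothing further is needed.
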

\begin{proof} It is routine by expanding expressions and factorizing properly.
%By developing the expression, we reach that 
%\begin{equation*}
%    \begin{split}
%        \wgro_z(x,y) - \wgro_y(x,z) &= ( d(x,z)-d(x,y))\cdot (d(x,z)+d(x,y)-d(y,z))\\
%        & =( d(x,z)-d(x,y)) G_x(y,z).
%    \end{split}
%\end{equation*}
%
%The Gromov product is always non-negative (and positive as long as $x$ is not in the metric interval between $y$ and $z$). Thus, $ \wgro_z(x,y) - \wgro_y(x,z) \leq 0$ if and only if $d(x,z)-d(x,y)\leq 0$.
\end{proof}

We now introduce the two quantities---and some of their properties---that will be essential for expressing the numerical index.

\begin{definition} \label{def:contop}
Let $M=(\{x,y,z\},d)$ be a triangle. 
Given a pair of points $x,y\in M$, we call its \textbf{optimal contribution} to 
\begin{equation*}
    \contopxy:= \dfrac{d(x,y) \gro_z(x,y)}{ \dxz \gro_y(x,z) +  \dyz \gro_x(y,z)} =\dfrac{\wgro_z(x,y)}{\wgro_y(x,z)+\wgro_x(y,z)}.
\end{equation*}
\end{definition}

% An easy consequence of Lemma \ref{lemma:wgro-order} is that the optimal contributions follow the same order as the weighted Gromovs (i.e., inverse order than the distances). 

\begin{lemma}\label{lemma:contop-order}
Let $M=(\{x,y,z\},d)$ be a triangle. Then, $d(x,y)\geq d(x,z)$  if and only if $\contopxy\leq \contopxz$.
\end{lemma}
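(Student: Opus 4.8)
The plan is to reduce the whole statement to the three weighted Gromov products of the triangle and then close the argument with Lemma \ref{lemma:wgro-order}. First I would fix the abbreviations $A := \wgro_z(x,y)$, $B := \wgro_y(x,z)$ and $C := \wgro_x(y,z)$. Since $M$ is a triangle, Definition \ref{def:non-aligned} guarantees that every Gromov product is strictly positive, and as every distance is positive we obtain $A, B, C > 0$; this positivity is what makes all the subsequent manipulations legitimate. By Definition \ref{def:contop} one has $\contopxy = A/(B+C)$, and applying the very same definition to the pair $(x,z)$ with remaining vertex $y$ (using the symmetry $\gro_x(z,y)=\gro_x(y,z)$ in the Gromov product) yields $\contopxz = B/(A+C)$.

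With these abbreviations the inequality $\contopxy \le \contopxz$ becomes $\frac{A}{B+C} \le \frac{B}{A+C}$. Because $B+C>0$ and $A+C>0$, I can cross-multiply without reversing the inequality to get $A(A+C) \le B(B+C)$, that is, $A^2 - B^2 + C(A-B) \le 0$. Factoring the left-hand side gives $(A-B)(A+B+C) \le 0$, and since $A+B+C>0$ this is equivalent to $A \le B$, i.e. to $\wgro_z(x,y) \le \wgro_y(x,z)$.

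Finally I would invoke Lemma \ref{lemma:wgro-order}, which states precisely that $\wgro_z(x,y) \le \wgro_y(x,z)$ holds if and only if $d(x,y) \ge d(x,z)$. Chaining this with the equivalence just established produces $d(x,y)\geq d(x,z) \iff A\le B \iff \contopxy \le \contopxz$, which proves both implications of the lemma simultaneously.

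There is no serious obstacle here: the heart of the argument is the one-line factorization $(A-B)(A+B+C)\le 0$, resting entirely on the strict positivity of the weighted Gromov products in a triangle. The only points that genuinely require care are bookkeeping ones, namely reading off the denominator of $\contopxz$ correctly as the sum of the two \emph{remaining} weighted Gromov products $A+C$ (rather than mis-assigning them), and explicitly recording positivity to justify both the cross-multiplication and the cancellation of the positive factor $A+B+C$. All of these facts follow solely from $M$ being a triangle, so they are available throughout.
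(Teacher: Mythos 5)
Your proof is correct and follows the same route the paper merely sketches (``routine by expanding expressions, factorizing properly, and using Lemma~\ref{lemma:wgro-order}''): you correctly read off $\contopxz=\wgro_y(x,z)/(\wgro_z(x,y)+\wgro_x(y,z))$, and the factorization $(A-B)(A+B+C)\le 0$ together with the strict positivity of the weighted Gromov products reduces everything to Lemma~\ref{lemma:wgro-order}. Your version is a clean, fully explicit writeup of exactly the argument the paper intends.
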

\begin{proof}
It is routine by expanding expressions, factorizing properly, and using Lemma \ref{lemma:wgro-order}.
 %The inequality $\contopxy \leq \contopxz$ can be rewritten as
 %\begin{equation*}
 %    \dfrac{\wgro_z(x,y)}{\wgro_y(x,z)+\wgro_x(y,z)} \leq \dfrac{\wgro_y(x,z)}{\wgro_z(x,y)+\wgro_x(y,z)}.
 %\end{equation*}
 %
 %Recall that every weighted Gromov product is strictly positive ($M$ is a triangle), so this expression is equivalent to 
 %\begin{equation*}
 %   \wgro_z(x,y)^2+  \wgro_z(x,y)  \wgro_x(y,z)\leq   \wgro_y(x,z)^2 +   \wgro_x(y,z)  \wgro_y(x,z). 
 %\end{equation*}
 %
 %After some algebraic manipulations, we reach that this equation is equivalent to 
 %\begin{equation*}
  %   (\wgro_z(x,y)-  \wgro_y(x,z)) \cdot (  \wgro_z(x,y)+  \wgro_y(x,z)+  \wgro_x(y,z))\leq 0.
 %\end{equation*}
%As the right-hand factor is positive, the inequality is satisfied if and only if $  \wgro_z(x,y)\leq   \wgro_y(x,z)$, and by Lemma \ref{lemma:wgro-order} this is equivalent to $d(x,y)\geq d(x,z)$.
\end{proof}

\begin{definition}
Let $(M,d)$ be a metric space. Then, given a triplet of points $x,y,z\in M$, we call its \textbf{metric ratio} to 
\begin{equation*}
    \rxyz:= \frac{\dxy}{d(x,z)+ d(y,z)}.
\end{equation*}
\end{definition}

%It is easy to see that the metric ratios follow the same order as the distances on the original metric space. We state this for further reference

\begin{lemma}\label{lemma:ratios-order}
Let $M=(\{x,y,z\},d)$ a metric space with 3 points. Then, $\rxyz\geq \rxz $ if and only if $d(x,y)\geq d(x,z)$.
\end{lemma}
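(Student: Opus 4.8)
The plan is to turn the claimed equivalence into a single polynomial inequality whose sign is transparent. First I would write both metric ratios explicitly from the definition: $\rxyz=\frac{\dxy}{\dxz+\dyz}$ and, after the obvious relabelling, $\rxz=\frac{\dxz}{\dxy+\dyz}$. Since the three points are distinct, all three pairwise distances are strictly positive, so both denominators $\dxz+\dyz$ and $\dxy+\dyz$ are positive. This means I may cross-multiply without reversing the inequality, and $\rxyz\geq\rxz$ becomes equivalent to
\[
   \dxy(\dxy+\dyz)\;\geq\;\dxz(\dxz+\dyz).
\]

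The crux is then a short factorization, exactly in the spirit of the routine manipulations behind Lemmas \ref{lemma:wgro-order} and \ref{lemma:contop-order}. Moving everything to one side and grouping the $\dyz$-terms gives $(\dxy^2-\dxz^2)+\dyz(\dxy-\dxz)$, which factors as
\[
   (\dxy-\dxz)(\dxy+\dxz+\dyz)\;\geq\;0.
\]
The second factor is the perimeter of the triangle and is therefore strictly positive, so the sign of the whole expression is governed entirely by $\dxy-\dxz$. Hence the inequality holds if and only if $\dxy\geq\dxz$, which is precisely the claim.

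I expect no genuine obstacle here: the only thing to be careful about is recording that positivity of all distances both legitimizes the cross-multiplication and guarantees that the perimeter factor never changes sign, so that the equivalence is a true \emph{if and only if} rather than a one-directional implication.
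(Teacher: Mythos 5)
Your proof is correct: the cross-multiplication is legitimate since all pairwise distances are strictly positive, and the factorization $(\dxy-\dxz)(\dxy+\dxz+\dyz)\geq 0$ cleanly yields the equivalence. The paper states this lemma without proof, treating it as one of the "routine by expanding and factorizing" computations, and your argument is exactly the intended one, with the helpful addition of making the positivity checks explicit.
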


%Further, it would be crucial to compare optimal contributions and metric ratios. Once the order in the distances in $M$ is known, the following results provide the relations. 

We establish the relationship between optimal contribution and metric ratio.

\begin{lemma} \label{lemma:ratio-vs-contop}
Let $M=(\{x,y,0\},d)$ be a triangle with ${\dxy \geq \ppx \geq \ppy}$. Then, $\rx\geq \contopx$ if and only if $\ppx \geq \dfrac{\dxy^2 + \ppy^2}{\dxy +\ppy}$.
\end{lemma}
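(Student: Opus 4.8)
The plan is to reduce the stated equivalence to a single polynomial inequality in the three distances $\dxy$, $\ppx$, $\ppy$ and verify it by direct algebra. Setting $z=0$, I would first expand the three Gromov products entering $\contopx$, namely $\grox = \dxy + \ppy - \ppx$, $\groy = \dxy + \ppx - \ppy$, and $\groxy = \ppx + \ppy - \dxy$. Specializing the definitions of metric ratio and optimal contribution to the pair $(x,0)$ with $y$ as the third point, the two quantities to be compared become
$$\rx = \frac{\ppx}{\dxy + \ppy}, \qquad \contopx = \frac{\ppx\,\grox}{\dxy\,\groxy + \ppy\,\groy} = \frac{\ppx(\dxy + \ppy - \ppx)}{\dxy\,\groxy + \ppy\,\groy}.$$

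Second, I would record that, since $M$ is a triangle, every Gromov product is strictly positive; hence both the denominator $\dxy + \ppy$ of $\rx$ and the denominator $D := \dxy\,\groxy + \ppy\,\groy$ of $\contopx$ are strictly positive, as is the common factor $\ppx$. This justifies cancelling $\ppx$ and cross-multiplying, so that $\rx \geq \contopx$ is equivalent to
$$D \geq (\dxy + \ppy)(\dxy + \ppy - \ppx).$$

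Third, I would expand both sides in $\dxy,\ppx,\ppy$ and form their difference. The cross terms cancel and the expression collapses to
$$D - (\dxy + \ppy)(\dxy + \ppy - \ppx) = 2\big[\ppx(\dxy + \ppy) - (\dxy^2 + \ppy^2)\big].$$
Because $\dxy + \ppy > 0$, the right-hand side is nonnegative precisely when $\ppx \geq \frac{\dxy^2 + \ppy^2}{\dxy + \ppy}$, which is exactly the claimed condition, and the equivalence follows.

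The difficulty here is notational rather than conceptual. The two points that require care are correctly specializing the general definition of $\contop$ to the pair $(x,0)$ (so that the weights $\dxz,\dyz$ become $\dxy,\ppy$ and the Gromov products become $\groxy,\groy$), and confirming $D>0$ before cross-multiplying. I expect the only real obstacle to be carrying out the final expansion without sign errors; it is the cancellation of the cross terms that produces the clean factor $\ppx(\dxy + \ppy) - (\dxy^2 + \ppy^2)$ on which the whole statement hinges.
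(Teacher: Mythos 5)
Your proof is correct and is essentially the computation the paper leaves to the reader ("routine by expanding the expressions involved and factorizing properly"): you specialize the definitions correctly, justify the cross-multiplication via positivity of the denominators, and the final expansion indeed collapses to $2\bigl[\ppx(\dxy+\ppy)-(\dxy^2+\ppy^2)\bigr]$, giving the stated equivalence.
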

\begin{proof}
It is routine by expanding the expressions involved and factorizing properly.
 %After developing the expressions, 
 %\begin{equation*}
 %    \begin{split}
 %       & \quad \rx \geq \contopx\\
 %      &\iff  \dfrac{\ppx}{\dxy + \ppy} \geq \dfrac{\ppx \grox}{\dxy\groxy + \ppy \groy} \\
 %      &\iff  \dxy (\groxy -\grox   ) + \ppy (\groy - \grox) \geq 0\\
 %        &\iff  2\dxy (\ppx-\dxy   ) + 2\ppy (\ppx -\ppy ) \geq 0\\
 %           &\iff  \ppx \geq \dfrac{\dxy^2 + \ppy^2}{\dxy +\ppy}
 %    \end{split}
 %\end{equation*}
\end{proof}

\begin{lemma}\label{lemma:order-on-extremes}
Let $M=(\{x,y,0\},d)$ be a triangle with ${\dxy \geq \ppx \geq \ppy}$. Then, 
\begin{align*}
    &\max\{\contopxy, \rxy\}= \rxy;\\
    &\max\{\contopy, \ry\}= \contopy.
\end{align*}

Moreover, $\rxy \geq \contopy$ if and only if $\ppx \leq \dfrac{\dxy^2 + \ppy^2}{\dxy +\ppy}$.
\end{lemma}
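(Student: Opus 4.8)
The plan is to reduce each of the three assertions to a polynomial inequality in the three mutual distances $\dxy$, $\ppx$, $\ppy$, and then to exploit the hypothesis $\dxy \geq \ppx \geq \ppy$. The uniform device is this: since $M$ is a triangle, every Gromov product is strictly positive, so all denominators occurring below are positive and cross-multiplication preserves every inequality. I would start from the expansions $\contopxy = \dxy\,\groxy/(\ppx\,\grox+\ppy\,\groy)$, $\contopy = \ppy\,\groy/(\dxy\,\groxy+\ppx\,\grox)$, $\rxy = \dxy/(\ppx+\ppy)$ and $\ry = \ppy/(\dxy+\ppx)$, and substitute the explicit values $\groxy = \ppx+\ppy-\dxy$, $\grox = \dxy+\ppy-\ppx$, $\groy = \dxy+\ppx-\ppy$ once the denominators have been cleared.

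For the first identity, $\max\{\contopxy,\rxy\}=\rxy$ is equivalent to $\contopxy \leq \rxy$. Cancelling the common factor $\dxy$ and cross-multiplying, this becomes $\groxy(\ppx+\ppy) \leq \ppx\,\grox+\ppy\,\groy$, which after substitution simplifies to the clean inequality $\dxy(\ppx+\ppy) \geq \ppx^2+\ppy^2$. This is immediate from the ordering, since $\dxy \geq \ppx$ yields $\dxy\,\ppx \geq \ppx^2$ and $\dxy \geq \ppy$ yields $\dxy\,\ppy \geq \ppy^2$; adding the two gives the claim.

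For the second identity, $\max\{\contopy,\ry\}=\contopy$ is equivalent to $\ry \leq \contopy$. Cancelling $\ppy$ and cross-multiplying reduces it to $\dxy\,\groxy+\ppx\,\grox \leq \groy(\dxy+\ppx)$, which after substitution simplifies to $\dxy(\dxy-\ppy)+\ppx(\ppx-\ppy) \geq 0$; this again follows at once from $\dxy \geq \ppy$ and $\ppx \geq \ppy$.

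The ``moreover'' part is the only step requiring a genuine idea. Clearing denominators, $\rxy \geq \contopy$ is equivalent to $P \geq 0$, where $P := \dxy(\dxy\,\groxy+\ppx\,\grox) - \ppy\,\groy(\ppx+\ppy)$ becomes, after substitution, a homogeneous cubic in $\dxy,\ppx,\ppy$. The crux---which I expect to be the main obstacle, as it is not apparent before computing---is the factorization $P = \groxy\cdot E$, where $E := \dxy^2+\ppy^2-\ppx(\dxy+\ppy)$; I would obtain $E$ by dividing the cubic by the known positive factor $\groxy = \ppx+\ppy-\dxy$ (equivalently, by matching the coefficients of a linear cofactor). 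Since $\groxy > 0$ for a triangle, $P \geq 0$ holds if and only if $E \geq 0$, and $E \geq 0$ is exactly $\ppx(\dxy+\ppy) \leq \dxy^2+\ppy^2$, i.e. $\ppx \leq (\dxy^2+\ppy^2)/(\dxy+\ppy)$, as required. As a consistency check I note that this threshold is the very same quantity appearing in Lemma \ref{lemma:ratio-vs-contop} (there governing $\rx$ versus $\contopx$, with the opposite direction on $\ppx$), which is reassuring given that the two inequalities concern different pairs of points.
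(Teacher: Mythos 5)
Your proposal is correct: all three reductions and the key factorization check out (I verified the cubic identity $\dxy(\dxy\,\groxy+\ppx\,\grox)-\ppy\,\groy(\ppx+\ppy)=\groxy\cdot\bigl(\dxy^2+\ppy^2-\ppx(\dxy+\ppy)\bigr)$). For the ``moreover'' part you follow essentially the paper's route: both arguments clear denominators to reach the same polynomial and both extract the factor $\groxy$ --- the paper by viewing the expression as a concave parabola in $\ppx$ with roots $\dxy-\ppy$ and $\frac{\dxy^2+\ppy^2}{\dxy+\ppy}$ (the first root being precisely the vanishing of $\groxy$, discarded by the triangle inequality), you by dividing out the known positive factor directly. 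Where you genuinely diverge is in the two identities: the paper proves $\contopxy\leq\rxy$ and $\ry\leq\contopy$ with no expansion at all, using the ordering $\groxy\leq\grox\leq\groy$ of Lemma~\ref{lemma:grom-order} to replace the Gromov products appearing in numerator and denominator by the common middle value $\grox$, which monotonically pushes each quotient toward the corresponding metric ratio. Your direct expansion instead lands on the clean inequalities $\dxy(\ppx+\ppy)\geq\ppx^2+\ppy^2$ and $\dxy(\dxy-\ppy)+\ppx(\ppx-\ppy)\geq 0$, which follow at once from $\dxy\geq\ppx\geq\ppy$. The paper's majorization is slicker and reuses an earlier lemma; your computation is self-contained and makes explicit exactly how much slack the ordering hypothesis provides. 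Both are complete proofs.
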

\begin{proof}
First, we will prove that for the longest side of the triangle ($d(x,y)$), its ratio is greater than its optimal contribution. Indeed, by Lemma \ref{lemma:grom-order}, we know that $\gro_z(x,y)\leq \gro_y(x,z)\leq \gro_x(y,z)$, which implies that
\begin{align*}
    \contop(m_{x,y}) &= \dfrac{d(x,y) \gro_z(x,y)}{ d(x,z)\gro_y(x,z) +  d(y,z) \gro_x(y,z)}\\ 
    &\leq  \dfrac{d(x,y) \gro_y(x,z)}{ d(x,z)\gro_y(x,z) +  d(y,z) \gro_y(x,z)}\\ 
    &= \dfrac{d(x,y)}{d(x,z)+d(y,z)}= \rxyz.
\end{align*}
 
Analogously, for the shortest side ($d(y,z)$), the inequality is the opposite. Indeed, 
\begin{align*}
    \contop(m_{y,z}) &= \dfrac{d(y,z) \gro_z(y,z)}{ d(x,y)\gro_z(x,y) +  d(x,z) \gro_y(x,z)}\\ 
    &\geq  \dfrac{d(y,z) \gro_y(x,z)}{ d(x,y)\gro_y(x,z) +  d(x,z) \gro_y(x,z)}\\
    &= \dfrac{d(y,z)}{d(x,y)+d(x,z)}= \ryz.
\end{align*} 

Moreover, developing the inequality  $\rxy \geq \contopy$, one obtains the equivalent inequality
\[
\dxy(\wgroxy + \wgrox) \geq \wgroy (\ppx + \ppy).
\]
 
 %\begin{equation*}
 %    \begin{split}
 %        \frac{\dxy}{\ppx + \ppy} &\geq \frac{\wgroy}{\wgroxy + \wgrox} \\
 %        \iff \dxy(\wgroxy + \wgrox) &\geq \wgroy (\ppx + \ppy).
 %    \end{split}
 %\end{equation*}
 
 Now, we make a full expansion of the expression and rearrange the result, writing it as a polynomial of variable $\ppx$, reaching %
 %\begin{equation*}
 %    \Big(-\dxy - \ppy\Big) \ppx^2 + \Big(2\dxy^2\Big) \ppx + \Big(\ppy\dxy^2 - \dxy^3 - \ppy^2\dxy + \ppy^3 \Big)\geq 0.
 %\end{equation*}
 %This is a parabola with negative leading coefficient. Solving its associated second degree equation, we reach the two solutions
 a concave parabola with roots
 \[\ppx= \dxy - \ppy \quad \text{and} \quad \ppx= \dfrac{\dxy^2 + \ppy^2}{\dxy +\ppy}.\]
Therefore, $\rxy \geq \contopy$ if and only if 
 \[
     \dxy - \ppy \leq \ppx \leq \dfrac{\dxy^2 + \ppy^2}{\dxy +\ppy}.
 \]

The left-hand side of the expression above is always satisfied as a consequence of triangular inequality, so we conclude that $\rxy \geq \contopy$ if and only if $\ppx \leq \frac{\dxy^2 + \ppy^2}{\dxy +\ppy}$, as we wanted to prove. 
\end{proof}

%%%%%%%%%%%%%%%%%%%%%%%%%%%%%%%%%%%%%%%%%%%%%%%%%%%%%%%%%%%%%%%%%%%%%%%%%%%%%%
%%%%%%%%%%%%%%%%%%%%%%%%%%%%%%%%%%%%%%%%%%%%%%%%%%%%%%%%%%%%%%%%%%%%%%%%%%%%%%
%%%%%%%%%%%%%%%%%%%%%%%%%%%%%%%%%%%%%%%%%%%%%%%%%%%%%%%%%%%%%%%%%%%%%%%%%%%%%%

\section{Geometric behaviour of the Lipschitz-free space associated to a triangle}\label{sec:num-rad-behaviour}

In this section, we will begin with the assessment of the numerical index in $\free{M}$, where $M$ is a metric space of 3 points. The whole section is devoted to proving that $n(\free{M})$ is greater than or equal to the estimation given in Theorem \ref{thm:num-ind-2D}.%
%, and to this end we will use all the tools developed in the previous section.
%As in our context we will work with a finite amount of extreme points, it will be useful to introduce the following notion.
We introduce first some notions of an auxiliary character.

\begin{definition}
  Let $X$ be a Banach space, $x\in \Ext{B_X}$, and $T\in \mathcal{L}(X)$. We refer as the \textit{contribution of} $x$ \textit{to the numerical radius of } $T$ to
  \begin{equation*}
      \nu(T,x):= \sup\{|\la x^*, Tx \ra|: x^*\in \Ext{B_{X^*}}, \la x,x^* \ra=1\}.
  \end{equation*}
\end{definition}
  
%It is straightforward that $\nu(T)= \sup\{\nu(T,x): x\in \Ext{B_X}\}$.
%Also, it is appreciated that to find the numerical radius of an operator, the evaluation against some norm-attaining elements is crucial.%
Given $X$ a Banach space and $x^*\in S_{X^*}$ a norm-attaining element, we say that the set
\[
        \face(x^*):= \{x\in S_X:\la x,x^* \ra=1\}
\]
is the \textit{face} generated by $x^*$ in $X$.

Now, considering $M$ a metric space of three points, generally, it is expected a hexagonal shape on the unit ball---recall Figure \ref{fig:free-repres}. Then, it is useful to identify the linear forms defining its maximal faces, as they are the only ones that we need to evaluate the numerical radius of an operator. 

Given $M=(\{x,y,z\},d)$ a metric space of three points, for every $z\in M$, consider $\pp_z \in {\free{M}^*}$ the norm-one linear form characterized by
\[{\la \pp_z, m_{x,z} \ra=1=\la \pp_z, m_{y,z} \ra}.\]
%As $m_{x,y}= \frac{\dxz}{\dxy}m_{x,z}-\frac{\dyz}{\dxy}m_{y,z} $, it is deduced that $\la \pp_z, m_{x,y}\ra = \frac{\dxz-\dyz}{\dxy}$. 
By the very definition of $m_{x,y}$, it follows that $\la \pp_z, m_{x,y}\ra = \frac{\dxz-\dyz}{\dxy}$. 

Therefore, we have that the six maximal faces of $B_{\free{M}}$ are $\face(\pm \pp_x)$, $\face(\pm \pp_y)$ and  $\face(\pm \pp_z)$.%
%Since we have not explicitly chosen the $0$ in $M$, the evaluations against the deltas remain unknown. Once this point is fixed, all three functionals $\pp_x,\pp_y,\pp_z\in \lip_0(M)$ must vanish at the same delta ($\dd_x,\dd_y $ or $\dd_z$, depending on the chosen $0$), and then the functionals are completely determined. 
We will use $\pp_z$ to denote this linear continuous functionals acting on $\free{M}$, while $\pp$ will be used, as introduced before, as the ``distance to zero'' Lipschitz map acting on $M$.

\begin{figure}[h!]
\begin{center}
\includegraphics[width=4in]{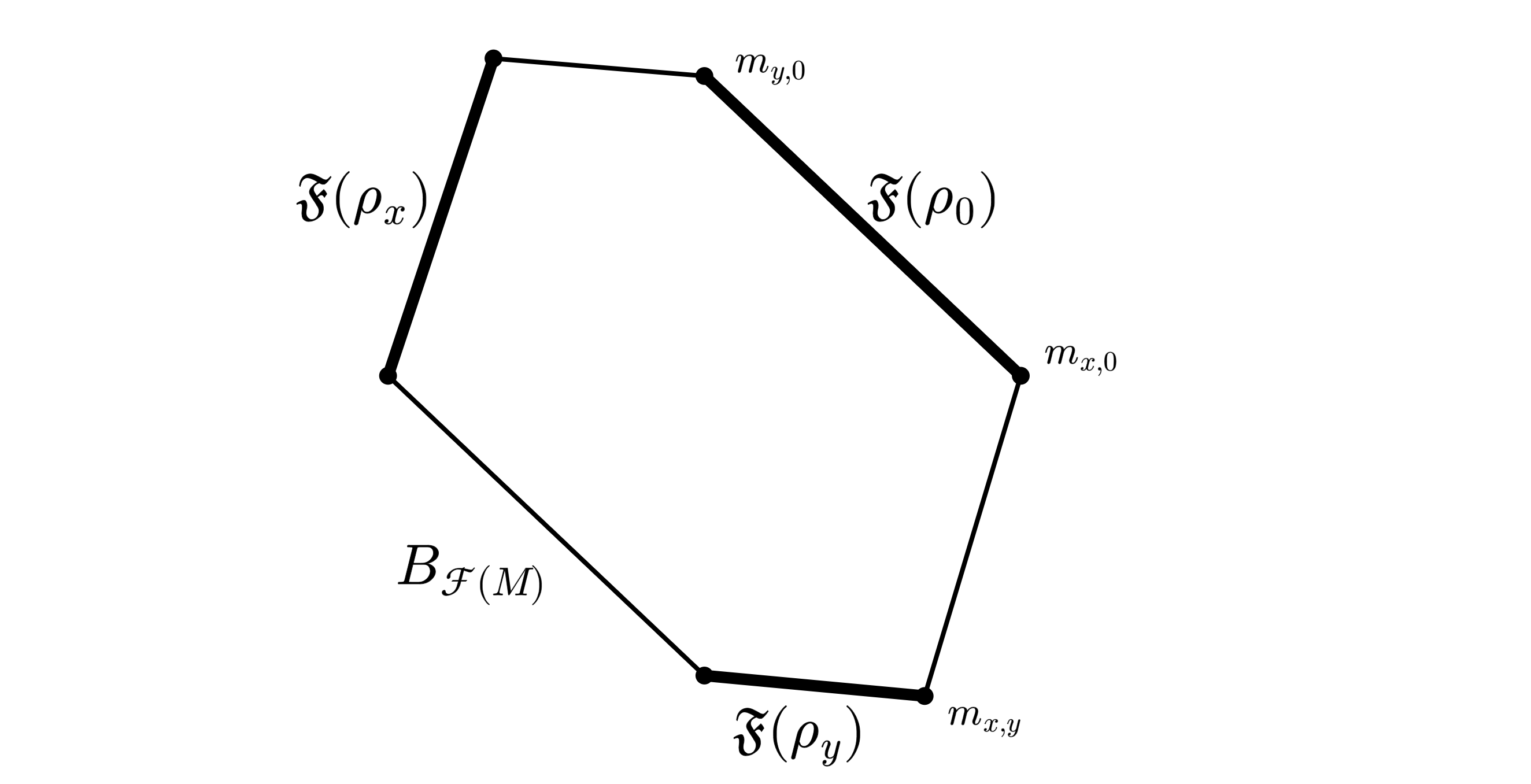}
\caption{Maximal faces in $B_{\free{M}}$.}
\label{fig:free-faces}
\end{center}
\end{figure}

However, depending on the underlying metric structure of $M$, this hexagon can also be a rhombus. This happens---as Figure \ref{fig:free-repres} illustrates---if and only if one molecule is in the convex hull of the other two, which is determined by the underlying metric structure of $M$, as a molecule $m_{x,y}$ lies in $\text{conv}\{m_{x,z}, m_{z,y}\}$ if and only if $d(x,y)= d(x,z)+ d(z,y)$,---i.e., $z$ lies in the metric segment between $x$ and $y$ or, equivalently, $G_z(x,y)=0$. To summarize, $B_{\free{M}}$ will be a rhombus---being $\free{M}$ isometrically isomorphic to $(\RR^2,\|\cdot\|_1)$---if and only if $M$ is not a triangle. It is well known that in this case $n(\free{M})=1$, covering the first part of Theorem \ref{thm:num-ind-2D}. Therefore, from this moment on we will avoid this trivial case by considering $M$ to be a triangle. %A remarkable consequence of separating the argument in these two cases is that, as a triangle is non-aligned, then every Gromov product is strictly positive, so we will avoid all the degenerated cases in all the quotients appearing in the paper.

\subsection{The first lower bound: the optimal contribution}

 We introduced the optimal contribution---see Definition \ref{def:contop}---as this quotient appears naturally as a lower bound for the numerical radius of a norm-one operator that sends a molecule $m_{x,y}$ to the sphere, as this is the lowest value of $\nu(T,m_{x,y})$ that such an operator can have. This phenomenon is reflected in the following result. 
\begin{lemma}\label{lemma:contop-bound}
Let $M=(\{x,y,z\},d)$ be a triangle, and $T\in S_{\free{M}}$ be such that $\|T m_{x,y}\|=1$. Then, 
\[\nu(T,m_{x,y})\geq     \contopxy.\]

Moreover, if 
\[T m_{x,y}= \lambda_z m_{x,z} + (1-\lambda_z)m_{y,z},\]
where
\[\lambda_z  := \dfrac{\wgro_y(x,z)}{\wgro_y(x,z)+\wgro_x(y,z)},\]
then $\nu(T,m_{x,y})=\contopxy$.

\end{lemma}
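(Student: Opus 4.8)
The plan is to first pin down which extreme functionals of $B_{\free{M}^*}$ can witness $\nu(T,m_{x,y})$. Since $m_{x,y}$ is a vertex of the hexagon $B_{\free{M}}$, I would use the identity $\la \pp_w, m_{a,b}\ra = \frac{d(a,w)-d(b,w)}{d(a,b)}$ to check directly that $\la \pp_y, m_{x,y}\ra = 1$ and $\la -\pp_x, m_{x,y}\ra = 1$, while $|\la \pp_z, m_{x,y}\ra| = \frac{\dxz-\dyz}{\dxy} < 1$ for a triangle. Hence, among the six extreme points $\pm\pp_x,\pm\pp_y,\pm\pp_z$ of $B_{\free{M}^*}$, only $-\pp_x$ and $\pp_y$ satisfy $\la m_{x,y},\cdot\ra = 1$. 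Writing $u := Tm_{x,y}$, which is a norm-one vector by hypothesis, this collapses the definition to
\[
\nu(T,m_{x,y}) = \max\{|\la \pp_x,u\ra|,\ |\la \pp_y,u\ra|\},
\]
so it suffices to prove this maximum is at least $\contopxy$ for every $u\in S_{\free{M}}$, with equality at the prescribed $u^*$.

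For the lower bound I would argue by contradiction. Because $\pm\pp_x,\pm\pp_y,\pm\pp_z$ are precisely the extreme points of $B_{\free{M}^*}$, one has $\|u\| = \max\{|\la \pp_x,u\ra|,|\la \pp_y,u\ra|,|\la \pp_z,u\ra|\}$. If $\max\{|\la \pp_x,u\ra|,|\la \pp_y,u\ra|\}$ were strictly below $\contopxy \leq 1$, then $\|u\|=1$ forces $|\la \pp_z,u\ra| = 1$, i.e. $u$ lies on $\pm\face(\pp_z)$; up to sign this is the edge $[m_{x,z},m_{y,z}]$, so $u = s\,m_{x,z} + (1-s)m_{y,z}$ for some $s\in[0,1]$. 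Evaluating, I obtain the two explicit affine functions $A(s) := \la \pp_x,u\ra = -s + (1-s)\tfrac{\dxy-\dxz}{\dyz}$ and $B(s) := \la \pp_y,u\ra = s\tfrac{\dxy-\dyz}{\dxz} - (1-s)$, where $A$ is strictly decreasing and $B$ strictly increasing in $s$.

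The core is then a one-variable minimization of $s \mapsto \max\{|A(s)|,|B(s)|\}$. Let $s_A, s_B$ be the zeros of $A, B$. On $[s_A,s_B]$ both functions are nonpositive, so $|A|=-A$ increases while $|B|=-B$ decreases, forcing the minimum to occur at the crossing $A(s)=B(s)$; outside this subinterval both branches are monotone away from it, so the global minimum on $[0,1]$ is that crossing value. Solving $A(s)=B(s)$ yields exactly $s=\lambda_z$, and substituting it I expect the factorizations (using $\gro_z(x,y) = \dxz+\dyz-\dxy$ and the identity $\wgro_y(x,z)-\wgro_x(y,z) = (\dyz-\dxz)\,\gro_z(x,y)$) to collapse both $A(\lambda_z)$ and $B(\lambda_z)$ to $-\contopxy$. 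This simultaneously gives $\min_s \max\{|A|,|B|\} = \contopxy$, contradicting the assumption and proving the lower bound, and — read directly — establishes the ``moreover'': for $u^* = \lambda_z m_{x,z} + (1-\lambda_z)m_{y,z}$ one has $\la \pp_x,u^*\ra = \la \pp_y,u^*\ra = -\contopxy$, whence $\nu(T,m_{x,y}) = \contopxy$.

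The main obstacle I anticipate is bookkeeping rather than conceptual: confirming that the crossing of $A$ and $B$ is genuinely $\lambda_z$ and that the common value factors cleanly as $-\contopxy$ requires a careful algebraic reduction, and one must verify that the interval $[s_A,s_B]$ containing $\lambda_z$ is nonempty (equivalently $s_A \leq s_B$). This last point is exactly where the triangle hypothesis $\gro_z(x,y) > 0$ enters decisively, since $s_B - s_A$ is proportional to $\dxy\,\gro_z(x,y)$.
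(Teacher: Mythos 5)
Your proposal is correct and follows essentially the same route as the paper: reduce $\nu(T,m_{x,y})$ to $\max\{|\la \pp_x,Tm_{x,y}\ra|,|\la \pp_y,Tm_{x,y}\ra|\}$, note that the only nontrivial case is $Tm_{x,y}\in\face(\pm\pp_z)$, parametrize that edge, and minimize the maximum of the two affine evaluations at their crossing point, which is exactly $\lambda_z$ with common value $-\contopxy$. The only cosmetic difference is that you phrase the lower bound as a contradiction and locate the minimizer via the zeros $s_A,s_B$, whereas the paper argues directly from the opposite monotonicity of the two evaluations; your computations (including $s_B-s_A$ proportional to $\dxy\,\gro_z(x,y)$) check out.
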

\begin{proof}
If $Tm_{x,y}\in \face(\pm \pp_x)$ or $Tm_{x,y}\in \face(\pm \pp_y)$, then $\nu(T,m_{x,y})=1$. Thus, we consider now, without loss of generality, that $Tm_{x,y}\in \face(\pp_z)$ (otherwise, we take the operator $-T$). Then, the operator must be of the form

\[ T_\lambda m_{x,y}= \lambda m_{x,z} + (1-\lambda)m_{y,z}, \quad \text{with } \lambda\in [0,1].\]

Therefore, the evaluations for the contribution of $m_{x,y}$ are 
\begin{align*}
&\la \pp_x , T_\lambda m_{x,y}\ra= -\lambda + (1-\lambda) \dfrac{d(x,y)-\dxz}{\dyz},\\
&\la \pp_y , T_\lambda m_{x,y}\ra= \lambda \dfrac{d(x,y)-\dyz}{\dxz} - (1-\lambda).
\end{align*}

A simple argument shows that considering $|\la \pp_x , T_\lambda m_{x,y}\ra|$ and $ |   \la \pp_y , T_\lambda m_{x,y}\ra|$, for any fixed values of the distances, one of them is increasing with the value of $\lambda$, while the other one is decreasing. Thus, the contribution of $m_{x,y}$, $\nu(T_\lambda, m_{x,y})= \max\{|\la \pp_x , T_\lambda m_{x,y}\ra|, |   \la \pp_y , T_\lambda m_{x,y}\ra|\}$, is minimized for the value of $\lambda$ satisfying $|\la \pp_x , T_\lambda m_{x,y}\ra|=|\la \pp_y , T_\lambda m_{x,y}\ra|$.

Then, the best value of $\lambda$ is the one that solves the equation
\[ \la \pp_x , T_\lambda m_{x,y}\ra=\la \pp_y , T_\lambda m_{x,y}\ra.\]
since, otherwise, if $\la \pp_x , T_\lambda m_{x,y}\ra=-\la \pp_y , T_\lambda m_{x,y}\ra  $, then $\lambda \in [0,1]$ if and only if $-\dyz G_z(x,y)\geq 0$, which is impossible. It is straightforward then that this optimal value of $\lambda$ is
\[\lambda_z := \dfrac{\dxz\gro_y(x,z)}{ \dxz\gro_y(x,z) +  \dyz\gro_x(y,z)} = \dfrac{\wgro_y(x,z)}{\wgro_y(x,z)+\wgro_x(y,z)}.
\]

Recall that both the numerator and denominator are strictly positive as a consequence of $M$ being a triangle and that---as can be checked---$\lambda_z$ belongs to $[0,1]$. By conveniently write
\[1-\lambda_z  = \dfrac{\wgro_x(y,z)}{\wgro_y(x,z)+\wgro_x(y,z)}.\]
and expand, we obtain that
\begin{align*}
\nu(T_{\lambda_z}, m_{x,y})&= \dfrac{d(x,y) \gro_z(x,y)}{ \dxz\gro_y(x,z) +  \dyz \gro_x(y,z)}\\ 
&=\dfrac{\wgro_z(x,y)}{\wgro_y(x,z)+\wgro_x(y,z)} = \contopxy.   
\end{align*}

Thus, as any operator $T$ with $Tm_{x,y}\in \face(\pp_z)$ is of the form $T_\lambda$, we would have $\nu(T_\lambda, m_{x,y})\geq \nu(T_{\lambda_z}, m_{x,y}) = \contopxy$.
\end{proof}
  
It is worth noticing that if we pick a molecule $m_{x,y}$, we can construct a triangle $M$  such that $\contopxy$ may be as close to zero as we please. Still, it is not possible to achieve an operator with numerical index arbitrarily small. Roughly speaking, decreasing the value of an optimal contribution will lead to an increase in the contributions of the other molecules. In this case, the optimal contribution will fail to accurately estimate the numerical radius. There is another value working as a lower bound for the numerical radius of an operator, which takes its place as the best estimation of the numerical index when the optimal contribution fails.

Before continuing with the argument, it seems remarkable to us that the optimal contribution is enough to reach a relevant insight.

\begin{proposition}
\label{prop:2D-1/2}
Let $M=(\{x,y,z\},d)$ be a metric space. Then, the numerical radius of $\free{M}$ is greater than or equal to $\frac{1}{2}$. Moreover, equality implies $M$ to be equilateral.
\end{proposition}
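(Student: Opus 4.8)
The plan is to prove the sharper statement that every $T\in S_{\mathcal{L}(\free{M})}$ satisfies $v(T)\ge\frac12$. First I would discard the degenerate case: if $M$ is not a triangle then $\free{M}$ is isometric to $(\RR^2,\|\cdot\|_1)$ and $n(\free{M})=1$, so I may assume $M$ is a triangle with $d(x,y)\ge d(x,z)\ge d(y,z)$. Since $\free{M}$ is two-dimensional with hexagonal unit ball and $\Ext B_{\free{M}}=\{\pm m_{x,y},\pm m_{x,z},\pm m_{y,z}\}$, the equality $\|T\|=1$ forces $\|Tm\|=1$ for at least one molecule $m$, and (after possibly replacing $T$ by $-T$) its image lies in a maximal face $\face(\rho_w)$. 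The argument then reduces to bounding $\nu(T,m)\le v(T)$ from below for this distinguished molecule.

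For that molecule $m$ I would combine two complementary lower bounds. The first is Lemma \ref{lemma:contop-bound}, which gives $v(T)\ge\nu(T,m)\ge\contop(e)$, where $e$ is the side of the triangle carrying $m$. The second is a linear-relation bound coming from the identity $d(x,z)\,m_{x,z}=d(x,y)\,m_{x,y}+d(y,z)\,m_{y,z}$ (and its analogues for the other sides): applying $T$, pairing with the functional $\rho_w$ of the face \emph{opposite} to $m$, and using that $\rho_w$ is incident to the other two molecules while $|\langle\rho_w,Tm\rangle|=1$, the triangle inequality yields $v(T)\ge R_e$, the metric ratio of the side $e$. If instead $Tm$ lies in a face incident to $m$, then $\nu(T,m)=1$ outright. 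Hence in every case $v(T)\ge\max\{\contop(e),R_e\}$.

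It then suffices to verify the purely metric inequality $\max\{\contop(e),R_e\}\ge\frac12$ for each of the three sides, and here the behaviour splits. For the longest side this is immediate from $R_z(x,y)=\frac{d(x,y)}{d(x,z)+d(y,z)}\ge\frac12$; for the shortest side, Lemma \ref{lemma:wgro-order} gives the ordering $\wgro_z(x,y)\le\wgro_y(x,z)\le\wgro_x(y,z)$, whence $\contop(y,z)=\frac{\wgro_x(y,z)}{\wgro_z(x,y)+\wgro_y(x,z)}\ge\frac12$. The middle side $(x,z)$ is the genuine obstacle, because there \emph{neither} $\contop(x,z)$ \emph{nor} $R_y(x,z)$ need reach $\frac12$ individually; I expect to dispatch it with the identity
\[
\contop(x,z)<\tfrac12 \iff (d(x,y)-d(y,z))^2 + d(x,z)\big(d(x,y)+d(y,z)-2\,d(x,z)\big)<0,
\]
whose right-hand side can be negative only when $2\,d(x,z)>d(x,y)+d(y,z)$, that is, exactly when $R_y(x,z)>\frac12$. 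Thus at least one of the two bounds always attains $\frac12$, so $v(T)\ge\frac12$ for every $T$, proving $n(\free{M})\ge\frac12$.

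For the equality assertion I would track strictness through these same three estimates: each of $\contop(y,z)=\frac12$, $R_z(x,y)=\frac12$, and $\max\{\contop(x,z),R_y(x,z)\}=\frac12$ reduces, via the equality cases of the orderings above and of the displayed identity, to $d(x,y)=d(x,z)=d(y,z)$. Consequently, if $M$ is not equilateral then $v(T)>\frac12$ for every norm-one $T$; since $S_{\mathcal{L}(\free{M})}$ is compact and $v$ is continuous, the infimum defining $n(\free{M})$ is attained and strictly exceeds $\frac12$. Therefore $n(\free{M})=\frac12$ can occur only when $M$ is equilateral.
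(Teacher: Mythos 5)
Your proposal is correct and follows essentially the same route as the paper: both arguments reduce to showing that, for the side $e$ whose molecule is sent to the sphere, $\contop(e)$ and $R_e$ cannot both fall below $\tfrac12$, via the identity $\contop(e)<\tfrac12\iff -(a-b)^2>c(a+b-2c)$ (where $c$ is the length of $e$), whose equality case also yields the equilateral characterization. The only differences are organizational — you verify the three sides separately and derive the ratio bound by a direct triangle-inequality estimate, whereas the paper runs a single \emph{reductio} combining the two inequalities for whichever molecule is involved — and your treatment of the equality case is a more explicit version of the details the paper leaves to the reader.
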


\begin{proof}
First, by \emph{reductio ad absurdum}, assume that there exists a norm one operator $T\in {\mathcal{L}(\free{M})}$ with $\nu(T)<\frac{1}{2}$. The metric space $M$ must be non-aligned since, if the points of $M$ are aligned, then ${n(\free{M})=1}$, which leads to a contradiction. Once again, $\|T\|=1$, implies that $T(B_{\free{M}})\subset B_{\free{M}}$ and that $Tm\in S_{\free{M}}$ for some $m\in \{m_{x,z}, m_{y,z}, m_{x,y}\}$. Assume without loss of generality that $Tm_{x,y}\in S_{\free{M}}$. As $\nu(T)<1$, either $Tm_{x,y}\in \face(\pp_z)$ or $Tm_{x,y}\in \face(-\pp_z)$. Now, $\nu(T)<\frac{1}{2}$ implies that $|\la \pp_z,Tm_{x,z} \ra|<\frac{1}{2}$ and $|\la \pp_z,Tm_{y,z} \ra|<\frac{1}{2}$. By the linearity of $T$, and by using that
\[
    Tm_{x,y}= \frac{\dxz}{d(x,y)} Tm_{x,z}-  \frac{\dyz}{d(x,y)} Tm_{y,z},
\]
we can estimate
\begin{align*}
|\la \pp_z, Tm_{x,y}\ra|&= \bigg|\frac{\dxz}{d(x,y)} \la\pp_z,Tm_{x,z}\ra-  \frac{\dyz}{d(x,y)} \la \pp_z, Tm_{y,z}\ra \bigg|\\
        &\leq \frac{\dxz}{d(x,y)} |\la\pp_z,Tm_{x,z}\ra|+  \frac{\dyz}{d(x,y)} |\la \pp_z, Tm_{y,z}\ra|  \\
        &< \frac{1}{2}\bigg( \frac{\dxz}{d(x,y)} +  \frac{\dyz}{d(x,y)} \bigg)= \dfrac{\dxz+\dyz}{2 d(x,y)}.\\    
\end{align*}
As $Tm_{x,y}\in \face(\pp_z)$ or $Tm_{x,y}\in \face(-\pp_z)$, we deduce that
\[  1= \|Tm_{x,y}\|= |\la \pp_z, Tm_{x,y}\ra|<\dfrac{\dxz+\dyz}{2 d(x,y)},\]

which implies that 
\begin{equation}\label{eq_ine1}
\dxz+\dyz>2d(x,y).\end{equation} However, $\nu(T)<\frac{1}{2}$ also implies that the contribution of $m_{x,y}$ must be strictly lower than $\frac{1}{2}$, but, at the same time, greater than the optimal contribution, i.e.,
\begin{equation*}
    \frac{1}{2}> \nu(T,m_{x,y})\geq \contopxy
\end{equation*}

This would imply that 
\begin{equation*}
    \dxz\gro_y(x,z)+ \dyz\gro_x(y,z)> 2d(x,y)G_z(x,y).
\end{equation*}
which, by expanding and factorizing properly, is equivalent to 
%After developing the expression and some easy simplifications, we reach
%\begin{equation*}
%   2\dxz\dyz-\dxz^2 -\dyz^2 >\dxz d(x,y) +\dyz d(x,y) - 2d(x,y)^2,
%\end{equation*}
%which is equivalent to 
\begin{equation*}
  -(\dxz-\dyz )^2 > d(x,y)(\dxz+\dyz-2d(x,y)).
\end{equation*}

By \eqref{eq_ine1}, the right-hand side is strictly positive, which leads to a contradiction. Then, no triangle $M$ has access to an operator $T$ with $\nu(T)<\frac{1}{2}$. Moreover, if the whole argument above is repeated with $\nu(T)\leq \frac{1}{2}$---we let the details to the reader---then, it is deduced that $\dxz=\dyz=d(x,y)$, i.e., $M$ is equilateral.
%In particular, if at the beginning of the argument, we assume $\nu(T)\leq \frac{1}{2}$, then all the strict inequalities become inequalities, deducing that $M$ should satisfy the property of $\dxz+\dyz-2d(x,y)\geq 0$, and  reaching the inequality
%\begin{equation*}
%  -(\dxz-\dyz)^2 \geq d(x,y)(\dxz+\dyz-2d(x,y)),
%\end{equation*}
%which is satisfied if and only if $\dxz=\dyz=d(x,y)$, i.e., $M$ being equilateral. 
\end{proof}

\begin{remark}
    \rm We will see in Section~\ref{sec:app} that equality in Proposition \ref{prop:2D-1/2} equivales to $M$ being equilateral. \eor
\end{remark}

\subsection{The second lower bound: the metric ratio}

\begin{proposition}
Let $M=(\{x,y,z\},d)$ be a triangle and $T\in S_{\mathcal{L}(\free{M})}$ such that $\|Tm_{x,y}\|=1$. Then, $\nu(T)\geq \rxyz$.
\end{proposition}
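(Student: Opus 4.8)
The plan is to argue by cases according to which maximal face of $B_{\free{M}}$ contains the sphere vector $Tm_{x,y}$. Since $\|T\|=1$ and $\|Tm_{x,y}\|=1$, the point $Tm_{x,y}$ lies in at least one of the six maximal faces $\face(\pm\pp_x)$, $\face(\pm\pp_y)$, $\face(\pm\pp_z)$. First I would record their vertices: $m_{x,y}$ is a vertex of $\face(\pp_y)$ and of $\face(-\pp_x)$ (so $-m_{x,y}$ is a vertex of $\face(-\pp_y)$ and $\face(\pp_x)$), while $\face(\pp_z)=\conv\{m_{x,z},m_{y,z}\}$ and its antipode are the only two maximal faces that do not have $\pm m_{x,y}$ among their vertices.

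The easy case is $Tm_{x,y}\in\face(u^*)$ with $u^*\in\{\pm\pp_x,\pm\pp_y\}$. By the vertex description there is a sign $\varepsilon\in\{-1,1\}$ with $\varepsilon m_{x,y}$ a vertex of $\face(u^*)$, i.e. $\la u^*,\varepsilon m_{x,y}\ra=1$, so the pair $(\varepsilon m_{x,y},u^*)$ is admissible for the numerical radius. As $Tm_{x,y}\in\face(u^*)$ gives $|\la u^*, T(\varepsilon m_{x,y})\ra|=|\la u^*, Tm_{x,y}\ra|=1$, we immediately obtain $\nu(T)=1\ge\rxyz$; nothing has to be computed here.

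The substantive case is $Tm_{x,y}\in\face(\pm\pp_z)$, equivalently $|\la\pp_z,Tm_{x,y}\ra|=1$. The key is that the decomposition $m_{x,y}=\tfrac{\dxz}{\dxy}\,m_{x,z}-\tfrac{\dyz}{\dxy}\,m_{y,z}$ is compatible precisely with $\pp_z$, since both $m_{x,z}$ and $m_{y,z}$ lie in $\face(\pp_z)$. Applying $\la\pp_z,T(\cdot)\ra$ to this identity and using the triangle inequality yields $1=|\la\pp_z,Tm_{x,y}\ra|\le\tfrac{\dxz}{\dxy}|\la\pp_z,Tm_{x,z}\ra|+\tfrac{\dyz}{\dxy}|\la\pp_z,Tm_{y,z}\ra|$. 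Because $(m_{x,z},\pp_z)$ and $(m_{y,z},\pp_z)$ are admissible pairs, each modulus on the right is at most $\nu(T)$, so $1\le\tfrac{\dxz+\dyz}{\dxy}\,\nu(T)$, that is $\nu(T)\ge\tfrac{\dxy}{\dxz+\dyz}=\rxyz$.

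I expect the only real obstacle to be the bookkeeping in the first paragraph: one must verify carefully that the four faces $\face(\pm\pp_x)$, $\face(\pm\pp_y)$ all carry $\pm m_{x,y}$ as a vertex, so that the unique nontrivial landing region for $Tm_{x,y}$ is the antipodal pair $\face(\pm\pp_z)$ opposite to $m_{x,y}$. Once this is settled both cases are short, and it is worth noting that the estimate produced in the $\pp_z$-case is exactly the metric ratio $\rxyz$, which is why no optimization over the operator is needed.
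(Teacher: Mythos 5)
Your proof is correct, and its skeleton is the one the paper uses: reduce to the case $Tm_{x,y}\in\face(\pm\pp_z)$ (the only maximal faces not containing $\pm m_{x,y}$ as a vertex, so that landing anywhere else forces $\nu(T)=1$), then exploit the decomposition $m_{x,y}=\tfrac{\dxz}{\dxy}m_{x,z}-\tfrac{\dyz}{\dxy}m_{y,z}$ evaluated against $\pp_z$. Where you diverge is in the concluding estimate. The paper argues by \emph{reductio ad absurdum}: assuming $\nu(T)<\rxyz$, it splits into cases according to the sign of $\la\pp_z,Tm_{x,z}\ra$ and in each case extracts a contradiction from the identity $1=\tfrac{\dxz}{\dxy}\la\pp_z,Tm_{x,z}\ra-\tfrac{\dyz}{\dxy}\la\pp_z,Tm_{y,z}\ra$. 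You instead observe that $(m_{x,z},\pp_z)$ and $(m_{y,z},\pp_z)$ are admissible numerical-radius pairs, so each term is bounded in modulus by $\nu(T)$, and a single application of the triangle inequality gives $1\le\tfrac{\dxz+\dyz}{\dxy}\,\nu(T)$ directly. This is a genuine simplification: it removes both the contradiction framework and the sign case analysis, at no cost in generality. The only point worth being careful about, which you flag yourself, is the face bookkeeping: $\la\pp_y,m_{x,y}\ra=1$ and $\la\pp_x,m_{x,y}\ra=-1$ by the defining property of $\pp_x,\pp_y$, so the four faces $\face(\pm\pp_x)$, $\face(\pm\pp_y)$ indeed carry $\pm m_{x,y}$ as a vertex and yield $\nu(T)=1$ immediately; that check is exactly as you describe and matches the paper's (tacit) use of the same fact.
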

\begin{proof}
 By \emph{reductio ad absurdum}, assume that $\nu(T)<\rxyz$. Then, by the triangular inequality, $Tm_{x,y} \in \face(\pp_z)$ or $Tm_{x,y} \in \face(-\pp_z)$ since $\rxyz \leq 1$. Without loss of generality, we assume $Tm_{x,y} \in \face(\pp_z)$---that is, $\la \pp_z,Tm_{x,y}\ra =1$. By the linearity of $T$, it holds that
\begin{equation}\label{eq:linearity-mxy}
    Tm_{x,y}= \frac{\dxz}{\dxy} Tm_{x,z}- \frac{\dyz}{\dxy} Tm_{y,z},
\end{equation}
and evaluating against $\pp_z$, we deduce that
\begin{equation} \label{eq:rho-ratio}
 1= \la \pp_z, Tm_{x,y}\ra= \frac{\dxz}{\dxy} \la \pp_z, Tm_{x,z}\ra- \frac{\dyz}{\dxy} \la \pp_z, Tm_{y,z}\ra.
\end{equation}

Now, there are two options, either $\la \pp_z, Tm_{x,z}\ra\leq 0$ or $\la \pp_z, Tm_{x,z}\ra> 0$. We will see that both lead to a contradiction. Indeed, if  $\la \pp_z, Tm_{x,z}\ra\leq 0$, by equation \eqref{eq:rho-ratio}, we deduce
\begin{equation*}
    \begin{split}
        |\la \pp_z, Tm_{y,z}\ra| &= \frac{\dxy}{\dyz} \Big| \frac{\dxz}{\dxy} \la \pp_z, Tm_{x,z}\ra-1  \Big| \\
       &\geq \frac{\dxy}{\dyz} \geq \frac{\dxy}{\dxz + \dyz} = \rxyz,
    \end{split}
\end{equation*}
which implies that $\nu(T)\geq \nu(T,m_{y,z})\geq  |\la \pp_z, Tm_{y,z}\ra| \geq \rxyz$, and yields a contradiction. On the other hand, if $\la \pp_z, Tm_{x,z}\ra$ is positive we would have that  
\begin{equation*}
    0<\la \pp_z, Tm_{x,z}\ra \leq \nu(T)<\rxyz ,
\end{equation*}
which implies that
\begin{equation*}
    0<\frac{\dxz}{\dxy} \la \pp_z, Tm_{x,z}\ra < \frac{\dxz}{\dxy}\rxyz = \frac{\dxz}{\dxz+\dyz}<1.
\end{equation*}

Therefore, by equation \eqref{eq:rho-ratio}, it implies that ${\la\pp_z, Tm_{y,z} \ra<0}$. But then, 
\begin{equation*}
\begin{split}
    \la\pp_z, Tm_{y,z} \ra &= \frac{\dxy}{\dyz} \Big( \frac{\dxz}{\dxy} \la \pp_z, Tm_{x,z}\ra-1  \Big)\\
   & < \frac{\dxy}{\dyz} \Big( \frac{\dxz}{\dxy} \rxyz-1  \Big)\\
  & = \frac{\dxz}{\dyz} \rxyz - \frac{\dxy}{\dyz}
%  &= \frac{\dxz \dxy}{\dyz(\dxz+\dyz)}  - \frac{\dxy(\dxz+\dyz)}{\dyz(\dxz+\dyz)}\\
%&= -\frac{\dxy}{\dxz+\dyz}
=-\rxyz,
    \end{split}
\end{equation*}

which leads again to the contradiction 
\[\nu(T)\geq |  \la\pp_z, Tm_{y,z} \ra | = -   \la\pp_z, Tm_{y,z} \ra >\rxyz.\qedhere \] 
\end{proof}

We have just proved that if a norm-one operator sends a specific extreme point to the sphere, two values work as lower bounds for the numerical radius, so we may take the greatest of them to control the numerical radius. We state this for further reference.

\begin{corollary}\label{cor:radius-bound}
Let $M=(\{x,y,z\},d)$  be a triangle and $T\in S_{\mathcal{L}(\free{M})}$ satisfying $\|Tm_{x,y}\|=1$. Then, 
\begin{equation*}
    \nu(T)\geq \max\{\contopxy, \rxyz\}.
\end{equation*}
\end{corollary}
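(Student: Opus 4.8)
The plan is to obtain the bound by simply collecting the two lower estimates already secured: the optimal-contribution bound from Lemma~\ref{lemma:contop-bound} and the metric-ratio bound from the Proposition proved immediately above. Both of these have exactly the hypotheses of the corollary---namely $M$ a triangle and $T\in S_{\mathcal{L}(\free{M})}$ with $\|Tm_{x,y}\|=1$---so no new estimate on the operator is required; the task reduces to assembling them and passing from the contribution of a single extreme point to the global numerical radius.

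First I would record that, since $M$ is a triangle, the unit ball $B_{\free{M}}$ is a genuine hexagon whose vertices are $\pm m_{x,z},\pm m_{y,z},\pm m_{x,y}$ (cf.\ Example~\ref{ex:free-repres} and the discussion of the maximal faces $\face(\pm\pp_x),\face(\pm\pp_y),\face(\pm\pp_z)$). In particular $m_{x,y}\in\Ext B_{\free{M}}$, so its contribution is one of the terms in the supremum \eqref{eq:num-rad-finite} defining the numerical radius, and hence $\nu(T)\geq\nu(T,m_{x,y})$. Chaining this with Lemma~\ref{lemma:contop-bound} gives $\nu(T)\geq\contopxy$. The second inequality $\nu(T)\geq\rxyz$ is precisely the conclusion of the preceding Proposition under the same hypotheses. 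Since $\nu(T)$ dominates each of the two quantities separately, it dominates their maximum, which is exactly the asserted bound.

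There is essentially no genuine obstacle here: the corollary is a formal corollary, a direct synthesis of the two results that precede it. The only point deserving a moment's care---and the step I would make explicit rather than leave implicit---is the justification that $m_{x,y}$ is extreme, so that $\nu(T,m_{x,y})$ legitimately bounds $\nu(T)$ from below. This is guaranteed the instant $M$ is assumed to be a triangle, because strict positivity of all Gromov products is exactly what prevents the three molecules from collapsing onto a single metric segment (which would turn the hexagon into a rhombus and drop $m_{x,y}$ from the extreme points). With that observation in place, the maximum bound follows immediately.
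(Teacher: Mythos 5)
Your proposal is correct and follows exactly the route the paper intends: the corollary is stated as an immediate synthesis of Lemma~\ref{lemma:contop-bound} (giving $\nu(T)\geq\nu(T,m_{x,y})\geq\contopxy$) and the preceding Proposition (giving $\nu(T)\geq\rxyz$), so $\nu(T)$ dominates their maximum. Your explicit remark that $m_{x,y}$ is an extreme point of the hexagonal ball when $M$ is a triangle, which legitimizes the step $\nu(T)\geq\nu(T,m_{x,y})$, is a correct and welcome addition that the paper leaves implicit.
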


As a norm-one operator in a finite-dimensional $\free{M}$ must send one of the extreme molecules to the sphere, the following provides a first lower bound for the numerical index.

\begin{proposition}\label{prop:num-ind-bound}
Let $M=(\{x,y,z\},d)$ a triangle. Then
\begin{align*}
    n(\free{M})\geq    \min\Big\{&\max\{\contop(m_{x,y}), \rxyz\},\\
    &\max\{\contop(m_{x,z}), \rxz\},\\
    &\max\{\contop(m_{y,z}), \ryz\} \Big\}
\end{align*}
\end{proposition}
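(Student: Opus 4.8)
The plan is to derive this bound directly from Corollary~\ref{cor:radius-bound}, using the finite dimensionality of $\free{M}$ to reduce an arbitrary norm-one operator to the situation that corollary covers. Recall that $n(\free{M})=\inf\{\nu(T):T\in S_{\mathcal{L}(\free{M})}\}$, so it suffices to bound $\nu(T)$ from below for a fixed but arbitrary $T\in S_{\mathcal{L}(\free{M})}$ and then pass to the infimum. Since $M$ is a triangle, $B_{\free{M}}$ is a hexagon whose extreme points are exactly $\pm m_{x,y}$, $\pm m_{x,z}$ and $\pm m_{y,z}$.

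The key step is to observe that such a $T$ must attain its norm at one of these three molecules. Indeed, $\|T\|=1$ forces $T(B_{\free{M}})\subseteq B_{\free{M}}$, and since the map $u\mapsto\|Tu\|$ is convex it attains its maximum over the hexagon $B_{\free{M}}$---which equals $\|T\|=1$---at a vertex; hence $\|Tm\|=1$ for some extreme molecule $m$. Replacing $m$ by $-m$ if necessary, which alters neither $\|Tm\|$ nor the bound to follow, we may assume $m\in\{m_{x,y},m_{x,z},m_{y,z}\}$.

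I would then apply Corollary~\ref{cor:radius-bound} to that molecule. The corollary is written for $m_{x,y}$, but as the labelling of the three points of $M$ is arbitrary, the identical reasoning gives $\nu(T)\geq\max\{\contop(m_{x,z}),\rxz\}$ when $\|Tm_{x,z}\|=1$ and $\nu(T)\geq\max\{\contop(m_{y,z}),\ryz\}$ when $\|Tm_{y,z}\|=1$. In each of the three cases $\nu(T)$ dominates one of the three ``$\max$'' quantities, and therefore dominates their minimum; taking the infimum over $T$ completes the argument. The only delicate point is the reduction above---that a norm-one operator necessarily saturates its norm on one of the three extreme molecules---which is precisely where finite dimensionality is used; everything else is a direct invocation of Corollary~\ref{cor:radius-bound} together with the symmetry of the roles of $x$, $y$, $z$.
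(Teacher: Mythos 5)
Your proposal is correct and follows essentially the same route as the paper: observe that a norm-one operator on the finite-dimensional space $\free{M}$ must send one of the three extreme molecules to the sphere, invoke Corollary~\ref{cor:radius-bound} (with the roles of $x$, $y$, $z$ permuted as needed), and take the minimum over the three cases. Your justification of the reduction step via convexity of $u\mapsto\|Tu\|$ on the hexagonal ball is a slightly more explicit version of the paper's appeal to ``convexity of the unit ball and linearity of $T$,'' but the argument is the same.
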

\begin{proof}
Fix any $T\in S_{\mathcal{L}(\free{M})}$. Then, $T(B_{\free{M}})\subset B_{\free{M}}$ and, by the convexity of the unit ball and linearity of $T$, we deduce that $Tm\in S_{\free{M}}$ for some $m\in \{m_{x,z}, m_{y,z}, m_{x,y}\}$. Depending on which molecule is $T$ sending to the sphere, we may take its corresponding lower bound of the numerical radius given by Corollary \ref{cor:radius-bound}. As the numerical index minimizes the numerical radius, we will consider the best-case scenario: taking the minimum of all three lower bounds. 
\end{proof}

\textbf{Remark:} At this point, the previous Proposition already hints that depending on the computation of that minimum, we know which molecule an operator $T$ must send to the sphere to attain the numerical radius. So we could expect this case to provide the operator with the smallest numerical radius. However, the last result can be dramatically improved. Once the ordering in the distances on the metric space $M$ is known, all the relations between the metric ratios and the optimal contributions are clear, so there is no need to compute the whole minimum. Thus, we are in conditions to provide a simpler estimation of the numerical index of $\free{M}$ by refining---by means of the results of Section~\ref{sec:metric-tools}---Proposition \ref{prop:num-ind-bound}.

\begin{proposition} \label{prop:num-ind-optimal-bound}
Let $M=(\{x,y,0\},d)$ be a triangle. Assume that it satisfies ${\dxy \geq \ppx \geq \ppy}$. Then,
\begin{equation*}
    n(\free{M})\geq     \max\{\contopx, \rx\}.
\end{equation*}
\end{proposition}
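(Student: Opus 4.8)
The plan is to start from the lower bound of Proposition \ref{prop:num-ind-bound}, which under the normalization $z=0$ and the ordering $\dxy \geq \ppx \geq \ppy$ reads
\[
n(\free{M}) \geq \min\big\{\max\{\contopxy, \rxy\},\ \max\{\contopx, \rx\},\ \max\{\contopy, \ry\}\big\},
\]
and to show that the middle term realizes this minimum. The first move is to collapse the outer two terms using Lemma \ref{lemma:order-on-extremes}: since $\rxy$ is the ratio of the longest side and $\contopy$ the optimal contribution of the shortest side, that lemma yields $\max\{\contopxy, \rxy\} = \rxy$ and $\max\{\contopy, \ry\} = \contopy$. The bound thus becomes $n(\free{M}) \geq \min\{\rxy,\ \max\{\contopx, \rx\},\ \contopy\}$, so writing $B := \max\{\contopx, \rx\}$ it suffices to prove the two inequalities $B \leq \rxy$ and $B \leq \contopy$.

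Next I would record two monotonicity facts taken directly from the metric lemmas of Section~\ref{sec:metric-tools}. Lemma \ref{lemma:ratios-order} (with $z=0$) gives $\rx \leq \rxy$, because $\ppx \leq \dxy$; and Lemma \ref{lemma:contop-order}, applied with the \emph{shared} point taken to be $0$, gives $\contopx \leq \contopy$, because $\ppx \geq \ppy$. These two orderings already handle one half of each target inequality: $\rx \leq \rxy$ covers the ratio part of $B \leq \rxy$, and $\contopx \leq \contopy$ covers the contribution part of $B \leq \contopy$.

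The heart of the argument is then a dichotomy driven by the threshold $K := \frac{\dxy^2 + \ppy^2}{\dxy + \ppy}$, using the two sharp comparisons already available: Lemma \ref{lemma:ratio-vs-contop} states $\rx \geq \contopx$ iff $\ppx \geq K$, and the ``Moreover'' clause of Lemma \ref{lemma:order-on-extremes} states $\rxy \geq \contopy$ iff $\ppx \leq K$. If $\ppx \geq K$, then $B = \rx$, and the chain $\rx \leq \rxy \leq \contopy$ (the last step being $\rxy \leq \contopy$ from the threshold) gives both $B \leq \rxy$ and $B \leq \contopy$. If instead $\ppx \leq K$, then $B = \contopx$, and the chain $\contopx \leq \contopy \leq \rxy$ gives both inequalities. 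In either regime $B$ is the minimum of the three terms, whence $n(\free{M}) \geq B = \max\{\contopx, \rx\}$.

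I expect the only delicate point to be bookkeeping the variable roles when invoking the metric lemmas—in particular applying Lemma \ref{lemma:contop-order} with $0$ rather than $x$ as the common point, so that its distance hypothesis becomes the available $\ppx \geq \ppy$. Once the two orderings and the two threshold equivalences are aligned, the case split is forced and requires no further computation; since $K$ is precisely the value where the two regimes meet, the boundary $\ppx = K$ is subsumed by either case.
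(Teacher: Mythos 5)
Your proposal is correct and follows essentially the same route as the paper: reduce via Lemma \ref{lemma:order-on-extremes} to $\min\{\rxy,\ \max\{\contopx,\rx\},\ \contopy\}$, then split on the threshold $\ppx \lessgtr \frac{\dxy^2+\ppy^2}{\dxy+\ppy}$ using Lemmas \ref{lemma:ratio-vs-contop} and \ref{lemma:order-on-extremes}, closing each case with the orderings from Lemmas \ref{lemma:ratios-order} and \ref{lemma:contop-order}. Your explicit chains $\rx \leq \rxy \leq \contopy$ and $\contopx \leq \contopy \leq \rxy$ simply spell out what the paper leaves implicit.
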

\begin{proof}
Through the assumption of the ordering of the distances and Lemma \ref{lemma:order-on-extremes}, the maximums at left and right of the inequality obtained in Proposition~\ref{prop:num-ind-bound} are determined, so we can deduce that 
\begin{equation*}
    n(\free{M})\geq    \min\Big\{  \rxy, \max\{\contopx, \rx\}, \contopy \Big\}.
\end{equation*}

At this point, we find a dichotomy in which always prevails the maximum in the middle. Indeed, there are two cases.

In one hand, if $\ppx\geq  \dfrac{\dxy^2 + \ppy^2}{\dxy +\ppy}$, by Lemma \ref{lemma:order-on-extremes}, $\rxy \leq \contopy$ and, by Lemma \ref{lemma:ratio-vs-contop}, $\max\{\contopx, \rx\}= \rx$. Then, by the order in the ratios---see Lemma \ref{lemma:ratios-order}---we conclude that $\rx$ is lower.

On the other hand, if $\ppx\leq  \dfrac{\dxy^2 + \ppy^2}{\dxy +\ppy}$, Lemma \ref{lemma:order-on-extremes} states that $\rxy \geq \contopy$ and, by Lemma \ref{lemma:ratio-vs-contop}, $\max\{\contopx, \rx\}= \contopx$. Then, by the order in the optimal contributions---see Lemma \ref{lemma:contop-order}---we know that the minimum value is $\contopx$.

Combining both cases, we have
\begin{equation*}
    n(\free{M})\geq    \max\{\contopx, \rx\}.
\end{equation*}
The proof is over.
\end{proof}

\section{Attaining the numerical index: construction of the operators}
\label{sec:const-op}

In the previous section, we found a lower bound for the numerical index of $\free{M}$ when $M$ is a triangle. This section is devoted to showing that this bound is sharp. We must construct norm-one operators in $\free{M}$ attaining this value as its numerical radius. Thus, given a triangle $M=(\{x,y,0\},d)$ with $\dxy \geq \ppx \geq \ppy$, we want to see that $n(\free{M})= \max\{\contopx, \rx\}$. We will refer to such a triangle $M$ as \textit{canonical}.

The maximum between these two values depends on a very geometrical condition on the triangle $M$---which is again translated to the geometry of $B_{\free{M}}$---and depending on which one is the greatest of both quantities, we will have two different scenarios, each one requiring a different construction of an operator with the lowest numerical radius. 

Therefore, we split the argument in two cases, Subsection \ref{subsec:op-ratio} holds the construction of the operator when $\rx = \max\{\contopx, \rx\}$, while Subsection \ref{subsec:op-contop} contains the case $\contopx= \max\{\contopx, \rx\}$. Notice that the explicit constructions are in Propositions \ref{prop:constr-op-ratio} and \ref{prop:constr-op-contop}, while the other results are just auxiliary lemmas.
\subsection{Operator for the metric ratio}\label{subsec:op-ratio}

\begin{lemma} \label{lemma:bound-mx0-ratio} 
Let $M=(\{x,y,0\},d)$ be a canonical triangle 
%Assume $M=(\{x,y,0\},d)$ a triangle with $\dxy \geq \ppx \geq \ppy$
and $\rx= \max\{\contopx, \rx \}$. Then, the vector
\[
    u:= \aa_x m_{y,x}+ (1-\aa_x)m_{y,0} \in S_{\free{M}},
\]
where 
\[
    \aa_x := 1- \dfrac{\ppy \grox}{(\dxy +\ppy) \groxy},
\]

satisfies $|\la \pp_0, u\ra|\leq \rx$.
\end{lemma}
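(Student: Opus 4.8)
The plan is to turn the statement into a single scalar identity and then verify two one-sided estimates. First I would evaluate $\pp_0$ on the two molecules that make up $u$: from the characterization of $\pp_0$ together with $m_{y,x}=-m_{x,y}$ one reads off $\la \pp_0, m_{y,x}\ra = \frac{\ppy-\ppx}{\dxy}$ and $\la \pp_0, m_{y,0}\ra = 1$, so by linearity
\[
\la \pp_0, u\ra \;=\; \aa_x\,\frac{\ppy-\ppx}{\dxy} + (1-\aa_x) \;=\; 1 - \aa_x\,\frac{\groy}{\dxy},
\]
using $\dxy+\ppx-\ppy=\groy$. I would also record, for the claim $u\in S_{\free M}$, that both $m_{y,x}$ and $m_{y,0}$ lie on the maximal face $\face(-\pp_y)$ --- indeed $\la -\pp_y, m_{y,x}\ra=\la -\pp_y, m_{y,0}\ra=1$ since $m_{y,x}=-m_{x,y}$, $m_{y,0}=-m_{0,y}$ and $\pp_y$ evaluates to $1$ on $m_{x,y}$ and $m_{0,y}$ --- so any convex combination with coefficient $\aa_x\in[0,1]$ stays on that face, hence on the unit sphere. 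That $\aa_x<1$ is immediate, and $\aa_x\ge 0$ will drop out of the upper-bound computation.

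The bound $|\la \pp_0, u\ra|\le \rx$ splits into $\la \pp_0, u\ra \le \rx$ and $\la \pp_0, u\ra \ge -\rx$. For the lower estimate, rearranging gives the equivalent form $\aa_x\frac{\groy}{\dxy}\le 1+\rx=\frac{\dxy+\ppx+\ppy}{\dxy+\ppy}$; since $\aa_x<1$ it suffices to check $\frac{\groy}{\dxy}\le\frac{\dxy+\ppx+\ppy}{\dxy+\ppy}$, and cross-multiplying collapses this to the triangle inequality $\ppx\le\dxy+\ppy$. Thus the lower estimate holds on every triangle and makes no use of the case hypothesis; this is the routine half.

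The upper estimate is where the hypothesis $\rx=\max\{\contopx,\rx\}$ enters. Substituting $\aa_x$ and clearing the positive factors $\dxy$, $\dxy+\ppy$ and $\groxy$, the inequality $\la \pp_0, u\ra\le \rx$ becomes the polynomial inequality $(\dxy+\ppy)\,\groxy\,\groy \ge \grox\big(\dxy\,\groxy+\ppy\,\groy\big)$. I expect the expansion and factorization of this to be the main obstacle: after writing the Gromov products in terms of $\dxy,\ppx,\ppy$, the difference of the two sides should collapse to $2\,\ppx\big[\ppx(\dxy+\ppy)-\dxy^2-\ppy^2\big]$, which is nonnegative exactly when $\ppx\ge\frac{\dxy^2+\ppy^2}{\dxy+\ppy}$; by Lemma~\ref{lemma:ratio-vs-contop} this is precisely the condition $\rx\ge\contopx$ defining the present case. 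The same bracket also governs $\aa_x$: its numerator $(\dxy+\ppy)\groxy-\ppy\grox$ exceeds $\ppx(\dxy+\ppy)-\dxy^2-\ppy^2$ by $\ppy\,\groxy>0$, so it is positive here, securing $\aa_x\ge 0$ and hence $u\in S_{\free M}$. Combining the two estimates yields $|\la\pp_0,u\ra|\le\rx$.
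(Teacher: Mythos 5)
Your proposal is correct and follows essentially the same route as the paper: compute $\la \pp_0,u\ra = 1-\aa_x\frac{\groy}{\dxy}$, reduce the upper bound to the condition $\ppx\geq\frac{\dxy^2+\ppy^2}{\dxy+\ppy}$ (equivalent to $\rx\geq\contopx$ via Lemma~\ref{lemma:ratio-vs-contop}) and the lower bound to the triangle inequality; your factorization $2\ppx\big[\ppx(\dxy+\ppy)-\dxy^2-\ppy^2\big]$ of the key polynomial difference checks out. The only genuine additions are welcome ones the paper leaves implicit: the verification that $\aa_x\in[0,1)$ and hence $u\in\face(-\pp_y)\subset S_{\free{M}}$, and the shortcut $\aa_x\leq 1$ that spares the full expansion in the lower estimate.
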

\begin{proof}
We will split the proof into two parts. First, 
\begin{align*}
1-\aa_x \frac{\groy}{\dxy} \leq \rx&\iff \frac{\grox}{\groy} \frac{\dxy}{\dxy+\ppy} \leq \aa_x %\\
    % &\iff \frac{\grox}{\groy} \frac{\dxy}{\dxy+\ppy} \leq  1- \dfrac{\ppy \grox}{(\dxy +\ppy) \groxy}.
 \end{align*}
 % We isolate the $1$ on the right-hand side and get the equivalent expression
 After the expansion of this expression, we reach the equivalent one
 \begin{equation*}
     \begin{split}
        % \frac{\grox}{\groy} \frac{\dxy}{\dxy+\ppy} + \dfrac{\ppy \grox}{(\dxy +\ppy) \groxy} &\leq  1\\
         % \iff
          \dxy\groxy \Big(\grox - \groy\Big) + \ppy\groy \Big(\grox-\groxy\Big)&\leq 0,
     \end{split}
 \end{equation*}
 and further simplifications lead to the inequality
 \begin{align*}
       %  &\iff 
       \ppx \Big(\dxy^2 + \ppy^2\Big) \leq \ppx^2\Big(\dxy +\ppy\Big), %\\
         %&\iff 
\end{align*}
which is again equivalent to 
\begin{align*}
     \dfrac{\dxy^2 + \ppy^2}{\dxy +\ppy} \leq \ppx.
\end{align*}
 
 Thus, the condition $\la \pp_0, u\ra\leq \rx$ is equivalent to this last inequation, and by Lemma \ref{lemma:ratio-vs-contop}, we know that this is once again equivalent to the hypothesis of $\rx= \max\{\contopx, \rx \}$.

Now, we will check the other inequality. Indeed,
\begin{align*}
      1-\aa_x \frac{\groy}{\dxy} \geq -\rx
    \iff   1+ \rx  \geq \aa_x \frac{\groy}{\dxy}.
\end{align*}

Through a full expansion, analogous to the previous one, we reach the expression
\begin{align*}
        2\ppx\ppy\dxy &\geq 2\ppx^2\ppy - 2 \ppx \ppy^2, %\\
  % \iff     \dxy &\geq  \ppx - \ppy.
\end{align*}
which is equivalent to $\dxy \geq  \ppx - \ppy$. But this is simply the triangular inequality and, therefore, true.
%Thus,  without using any hypothesis (but that the Gromov products are non-zero), the expression $ 1-\aa_x \frac{\groy}{\dxy} \geq -\rx$ is equivalent to the triangular inequality, so it is always true. 

Finally, combining both inequalities, we reach $|\la \pp_0, u\ra|\leq \rx$.
\end{proof}

Here we just state the computations of the optimal case provided in Lemma \ref{lemma:contop-bound}, but for the molecule $m_{y,0}$.

\begin{lemma}\label{lemma:opt-y}
Let $M=(\{x,y,0\},d)$ be a canonical triangle.
%$M=(\{x,y,0\},d)$ be a triangle with ${\dxy \geq \ppx \geq \ppy}$.
Let $\lambda_y$ be the optimal $\lambda$ for $m_{y,0}$ (the analogous provided by Lemma \ref{lemma:contop-bound}), and take $v:= \lambda_y m_{y,x}- (1-\lambda_y) m_{x,0}$. Then, 
\begin{align*}
    \la \pp_0,v \ra&= -1 + \lambda_y \dfrac{\grox}{\dxy}= -\contopy;\\
      \la \pp_y,v \ra&= - \Big(1 - (1- \lambda_y) \dfrac{\groxy}{\ppx}\Big)= 
      -\contopy;\\
         \la \pp_x,v \ra&=1.
\end{align*}
\end{lemma}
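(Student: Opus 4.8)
The plan is to establish the three displayed identities by a direct computation, using only linearity of the pairings together with the known action of the face functionals $\pp_0,\pp_x,\pp_y$ on molecules. First I would record the explicit values
\[
\lambda_y = \frac{\dxy\,\groxy}{\dxy\,\groxy + \ppx\,\grox}, \qquad 1-\lambda_y = \frac{\ppx\,\grox}{\dxy\,\groxy + \ppx\,\grox},
\]
obtained by applying the analog of Lemma \ref{lemma:contop-bound} to the molecule $m_{y,0}$ (whose opposite vertex is $x$). Next I would collect the pairings actually needed: from the defining property of $\pp_z$ and the antisymmetry $m_{a,b}=-m_{b,a}$ one gets $\la\pp_0,m_{x,0}\ra=1$, $\la\pp_0,m_{y,x}\ra=\frac{\ppy-\ppx}{\dxy}$, $\la\pp_x,m_{y,x}\ra=1$, $\la\pp_x,m_{x,0}\ra=-1$, $\la\pp_y,m_{y,x}\ra=-1$, and $\la\pp_y,m_{x,0}\ra=\frac{\dxy-\ppy}{\ppx}$.

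The third identity is then immediate and uses no metric data: by linearity $\la\pp_x,v\ra=\lambda_y\cdot 1-(1-\lambda_y)(-1)=\lambda_y+(1-\lambda_y)=1$. For the first two I would expand
\[
\la\pp_0,v\ra = \lambda_y\,\frac{\ppy-\ppx}{\dxy}-(1-\lambda_y), \qquad \la\pp_y,v\ra = -\lambda_y-(1-\lambda_y)\,\frac{\dxy-\ppy}{\ppx},
\]
and in each case factor out a $-1$ and recognize a Gromov product via $\dxy-(\ppx-\ppy)=\grox$ and $\ppx-(\dxy-\ppy)=\groxy$. This collapses the two expressions to the stated middle forms $-1+\lambda_y\frac{\grox}{\dxy}$ and $-1+(1-\lambda_y)\frac{\groxy}{\ppx}$.

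The remaining, and only slightly involved, step is to check that both middle forms equal $-\contopy=-\frac{\ppy\,\groy}{\dxy\,\groxy+\ppx\,\grox}$. Substituting the explicit $\lambda_y$ into the first and $1-\lambda_y$ into the second, both reduce to the single expression $-1+\frac{\grox\,\groxy}{\dxy\,\groxy+\ppx\,\grox}$, so it suffices to prove the polynomial identity
\[
\dxy\,\groxy+\ppx\,\grox-\grox\,\groxy=\ppy\,\groy.
\]
I would verify this by inserting $\groxy=\ppx+\ppy-\dxy$, $\grox=\dxy+\ppy-\ppx$, $\groy=\dxy+\ppx-\ppy$ and expanding; the convenient shortcut is the difference-of-squares grouping $\grox\,\groxy=\ppy^2-(\dxy-\ppx)^2$, after which every term cancels except $\ppy(\dxy+\ppx-\ppy)=\ppy\,\groy$. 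I do not expect a genuine obstacle here beyond careful bookkeeping; the one place deserving attention is keeping the three Gromov products correctly attached to the pair $(y,0)$ and its opposite vertex $x$, since a mismatch there is the easiest way to go astray.
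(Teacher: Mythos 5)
Your computation is correct: the value of $\lambda_y$, the six pairings of $\pp_0,\pp_x,\pp_y$ against $m_{y,x}$ and $m_{x,0}$, and the final polynomial identity $\dxy\,\gamma_0(x,y)+\ppx\,\gamma_y(x,0)-\gamma_y(x,0)\,\gamma_0(x,y)=\ppy\,\gamma_x(y,0)$ all check out, and this direct expansion is exactly what the paper intends (it states the lemma without proof, as the computations of Lemma \ref{lemma:contop-bound} transported to the molecule $m_{y,0}$). No gaps.
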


\begin{proposition}\label{prop:constr-op-ratio}
Let $M=(\{x,y,0\},d)$ be a canonical triangle
%$M=(\{x,y,0\},d)$ be a triangle such that $\dxy \geq \ppx \geq \ppy$ 
and \[\rx= \max\{\contopx, \rx.\}\] Then, there exists $T \in S_{\mathcal{L}(\free{M})}$ such that $\nu(T)= \rx$.
\end{proposition}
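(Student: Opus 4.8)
The plan is to construct a single $T\in S_{\mathcal{L}(\free{M})}$ and compute its numerical radius \emph{directly}, arranging that each of the three extreme molecules $m_{x,0},m_{y,0},m_{x,y}$ contributes exactly $\rx$. Since $\free{M}$ is two-dimensional, $T$ is determined by its values on the basis $\{m_{x,0},m_{y,0}\}$, and I would set
\[
    T m_{x,0}:=u,\qquad T m_{y,0}:=\rx\,m_{y,0},
\]
where $u=\aa_x m_{y,x}+(1-\aa_x)m_{y,0}\in S_{\free{M}}$ is the vector produced by Lemma~\ref{lemma:bound-mx0-ratio}. As $\|u\|=1$ and $\|\rx\,m_{y,0}\|=\rx\le 1$, the equality $\|T\|=1$ reduces to checking that the remaining image $T m_{x,y}$ lies in $B_{\free{M}}$.

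First I would record which extreme functionals of $B_{\free{M}^*}$ norm each molecule: from the definitions of $\pp_0,\pp_x,\pp_y$ and the identity $\langle\pp_z,m_{x,y}\rangle=\tfrac{\dxz-\dyz}{\dxy}$ one gets that $m_{x,0}$ is normed by $\{\pp_0,-\pp_x\}$, $m_{y,0}$ by $\{\pp_0,-\pp_y\}$, and $m_{x,y}$ by $\{\pp_y,-\pp_x\}$. For $m_{x,0}$, Lemma~\ref{lemma:bound-mx0-ratio} gives $|\langle\pp_0,u\rangle|\le\rx$, while a direct expansion using $\grox=\dxy+\ppy-\ppx$ collapses to $\langle\pp_x,u\rangle=1-\tfrac{\grox}{\dxy+\ppy}=\rx$, so $\nu(T,m_{x,0})=\rx$. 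For $m_{y,0}$ the two evaluations are immediate, $\langle\pp_0,\rx m_{y,0}\rangle=\rx$ and $\langle\pp_y,\rx m_{y,0}\rangle=-\rx$, whence $\nu(T,m_{y,0})=\rx$.

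For the last molecule I would use linearity, $T m_{x,y}=\tfrac{\ppx}{\dxy}u-\tfrac{\ppy}{\dxy}\rx\,m_{y,0}$. Feeding in $\langle\pp_y,u\rangle=-1$ and $\langle\pp_x,u\rangle=\rx$ yields $\langle\pp_y,T m_{x,y}\rangle=-\rx$ and $\langle\pp_x,T m_{x,y}\rangle=\tfrac{\rx(2\ppx-\dxy)}{\dxy}$, the latter bounded by $\rx$ in absolute value because $0\le\ppx\le\dxy$; hence $\nu(T,m_{x,y})=\rx$. The same expression gives $|\langle\pp_0,T m_{x,y}\rangle|\le\tfrac{(\ppx+\ppy)\rx}{\dxy}\le 1$, which together with the two previous bounds shows $\|T m_{x,y}\|\le 1$, so $\|T\|=1$. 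Taking the maximum over the three molecules then gives $\nu(T)=\rx$.

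The crux is the pair of exact cancellations $\langle\pp_x,u\rangle=\rx$ and $\langle\pp_y,T m_{x,y}\rangle=-\rx$: they are what force every contribution onto the common value $\rx$ instead of letting one overshoot. Both depend on the standing hypothesis $\rx=\max\{\contopx,\rx\}$, i.e.\ $\ppx\ge\tfrac{\dxy^2+\ppy^2}{\dxy+\ppy}$ by Lemma~\ref{lemma:ratio-vs-contop}, which is also what makes $\aa_x\in[0,1]$, so that $u$ genuinely sits on the face $\face(-\pp_y)$ and Lemma~\ref{lemma:bound-mx0-ratio} applies (the complementary regime is treated in Subsection~\ref{subsec:op-contop}). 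I expect the only real friction to be the sign bookkeeping in these evaluations and the verification $\|T m_{x,y}\|\le 1$. Note that this route pushes $m_{y,0}$ into the interior of the ball and hence does not require the optimal boundary vector $v$ of Lemma~\ref{lemma:opt-y}.
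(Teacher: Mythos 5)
Your construction is correct, and it follows the paper's strategy for the essential half while simplifying the other half. The paper also defines $Tm_{x,0}=u=\aa_x m_{y,x}+(1-\aa_x)m_{y,0}$ and gets $\nu(T,m_{x,0})=\rx$ from $\la \pp_x,u\ra=\rx$ together with Lemma~\ref{lemma:bound-mx0-ratio}; there the two proofs coincide. Where you diverge is on the second generator: the paper takes $Tm_{y,0}=\frac{\rx}{\contopy}\bigl(\lambda_y m_{y,x}-(1-\lambda_y)m_{x,0}\bigr)$, a rescaling of the optimal boundary vector of Lemma~\ref{lemma:opt-y}, which requires knowing $\rx\le\contopy$ to stay in the ball; you instead take $Tm_{y,0}=\rx\,m_{y,0}$, which makes the evaluations against $\pp_0$ and $\pp_y$ trivially $\pm\rx$ and bypasses Lemma~\ref{lemma:opt-y} and the comparison with $\contopy$ entirely. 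Crucially, your choice still produces the same exact cancellation $\la\pp_y,Tm_{x,y}\ra=-\frac{\ppx}{\dxy}+\frac{\ppy}{\dxy}\rx=-\rx$, and your bound $\la\pp_x,Tm_{x,y}\ra=\rx\frac{2\ppx-\dxy}{\dxy}$ with $0\le\ppx\le\dxy$, as well as $|\la\pp_0,Tm_{x,y}\ra|\le\frac{(\ppx+\ppy)\ppx}{\dxy(\dxy+\ppy)}\le1$, all check out, so $\|T\|=1$ and $\nu(T)=\rx$. The only imprecision is in your closing commentary: the identities $\la\pp_x,u\ra=\rx$ and $\la\pp_y,Tm_{x,y}\ra=-\rx$ are purely algebraic and hold for any canonical triangle; what actually uses the hypothesis $\rx=\max\{\contopx,\rx\}$ is the bound $|\la\pp_0,u\ra|\le\rx$ of Lemma~\ref{lemma:bound-mx0-ratio} (and, as a byproduct, $\aa_x\ge0$, though that can hold more generally). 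Net effect: your variant is slightly leaner, at the cost of not exhibiting the ``balanced'' operator whose every face evaluation is tight, which is what the paper's choice of $Tm_{y,0}$ showcases.
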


\begin{proof}
To construct a linear operator, it is enough to define the image of $m_{x,0}$ and $m_{y,0}$. First, consider $T$ a linear operator such that
\begin{equation*}
    Tm_{x,0}= \aa_x m_{y,x}+ (1-\aa_x)m_{y,0} \in S_{\free{M}}
\end{equation*}
where 
\begin{equation*}
    \aa_x := 1- \dfrac{\ppy \grox}{(\dxy +\ppy) \groxy}.
\end{equation*} 

%\textcolor{blue}{(it can be shown that indeed $(0\leq) \aa_x \leq 1$ by using the inequality ${\ppx \geq \frac{\dxy^2 + \ppy^2}{\dxy +\ppy}}$ from Lemma \ref{lemma:ratio-vs-contop}, and reaching that it is equivalent to the triangular inequality).}

Then, we can check that
\begin{equation*}
\begin{split}
    \la \pp_0,T m_{x,0} \ra&= 1 - \aa_x \dfrac{\groy}{\dxy};\\
      \la \pp_x,T m_{x,0} \ra&= 1 - (1- \aa_x) \dfrac{\groxy}{\ppy} = \rx;\\
         \la \pp_y,T m_{x,0} \ra&=-1.
    \end{split}
\end{equation*}

By Lemma \ref{lemma:bound-mx0-ratio}, we know that $ \la \pp_0,T m_{x,0} \ra \leq \rx$, so $\nu(T,m_{x,0})=|  \la \pp_x,T m_{x,0} \ra|= \rx$. 

Now, consider $T$ an operator such that

\begin{equation*}
    Tm_{y,0}= \dfrac{\rx}{\contopy} (\lambda_y m_{y,x}- (1-\lambda_y)m_{x,0}),
\end{equation*}

where $\lambda_y$ is the one in Lemma \ref{lemma:opt-y}. As a consequence of this result, it is easy to see that such an operator satisfies
\begin{equation*}
\begin{split}
    \la \pp_0,T m_{y,0} \ra&= -\rx;\\
      \la \pp_y,T m_{y,0} \ra&= -\rx;\\
         \la \pp_x,T m_{y,0} \ra&=\dfrac{\rx}{\contopy}.
    \end{split}
\end{equation*}

Recall that by the relation of the distances in $M$, ${\rx\leq \contopy}$, so this last quotient in absolute value is less than or equal to $1$, so the image of $Tm_{y,0}$ lies inside the unit ball.

Thus, such operator $T$ satisfies that $\nu(T,m_{y,0})= \rx$.

Now, consider the unique operator $T$ defined by the previous images of $m_{x,0}$ and $m_{y,0}$. Then, the image of $m_{x,y}$ is completely determined by linearity, as
\begin{equation*}
    Tm_{x,y}= \frac{\ppx}{\dxy} Tm_{x,0} -  \frac{\ppy}{\dxy} Tm_{y,0}.
\end{equation*}

After fully developing the evaluation against $\pp_x$, we obtain
\begin{align*}
        \la \pp_x ,  Tm_{x,y} \ra &= \frac{\ppx}{\dxy} \la \pp_x, Tm_{x,0}\ra  -  \frac{\ppy}{\dxy} \la \pp_x, Tm_{y,0}\ra \\
        &= \rx \Big( 1- \frac{2\ppx}{\dxy} \frac{\grox}{\groy}\Big),
\end{align*}

which, as $0\leq \frac{2\ppx}{\dxy} \frac{\grox}{\groy}\leq 1$, implies that $| \la \pp_x ,  Tm_{x,y} \ra |\leq \rx$.

Analogously, evaluating against $\pp_y$,  we deduce that 
\begin{equation*}
    \begin{split}
        \la \pp_y ,  Tm_{x,y} \ra &= \frac{\ppx}{\dxy} \la \pp_y, Tm_{x,0}\ra  -  \frac{\ppy}{\dxy} \la \pp_y, Tm_{y,0}\ra \\
        &=  - \frac{\ppx}{\dxy} +  \frac{\ppy}{\dxy}\rx = -\rx.
    \end{split}
\end{equation*}

Therefore, $\nu(T)=\nu(T,m_{x,y})= \rx$. The last thing to do is check that $T$ is a norm-one operator by showing that $T m_{x,y}$ lies inside the ball. Since the computations for the other functionals are already done, we only need to study its evaluation against $\pp_0$. Thus, 
\begin{equation*}
    \begin{split}
        |\la \pp_0 ,  Tm_{x,y} \ra| &= \Big|\frac{\ppx}{\dxy} \la \pp_0, Tm_{x,0}\ra  -  \frac{\ppy}{\dxy} \la \pp_0, Tm_{y,0}\ra \Big| \\
        &=  \Big| \frac{\ppx}{\dxy} \Big( 1 - \aa_x \dfrac{\groy}{\dxy}\Big) +    \frac{\ppy}{\dxy} \rx \Big|\\
        &\leq \rx \frac{\ppx +\ppy}{\dxy} = \frac{\ppx}{\dxy} \frac{\ppx+\ppy}{\dxy + \ppy}\leq 1,
    \end{split}
\end{equation*}
which finishes the proof.
\end{proof}

\subsection{Operator for the optimal contribution}\label{subsec:op-contop}

The following proposition contains the explicit construction of the operator $T$ satisfying that  $\nu(T)= \contopx$ when $\contopx \geq \rx $.

\begin{proposition}\label{prop:constr-op-contop}
Let $M=(\{x,y,0\},d)$ be a canonical triangle 
%$M=(\{x,y,0\},d)$ be a triangle with ${\dxy \geq \ppx \geq \ppy}$
and \[\contopx= \max\{\contopx, \rx \}.\] Then, there exist $T \in S_{\mathcal{L}(\free{M})}$ such that $\nu(T)= \contopx$.
\end{proposition}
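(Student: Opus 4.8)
The plan is to mirror the structure of Proposition \ref{prop:constr-op-ratio}, but now in the regime where the optimal contribution $\contopx$ is the dominant quantity. I would construct the operator $T$ by prescribing the images $Tm_{x,0}$ and $Tm_{y,0}$ and then let $Tm_{x,y}$ be determined by linearity through the identity $Tm_{x,y}= \frac{\ppx}{\dxy}Tm_{x,0}-\frac{\ppy}{\dxy}Tm_{y,0}$. The guiding principle, coming from Lemma \ref{lemma:contop-bound}, is that the molecule whose contribution governs the numerical radius should be sent to the \emph{interior} of a maximal face at the optimal parameter $\lambda$, so that its contribution equals exactly the optimal contribution rather than $1$. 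Since here $\contopx$ is the value to be attained, the natural choice is to send $m_{x,0}$ to the sphere in an optimal way (realizing $\nu(T,m_{x,0})=\contopx$ via the optimal $\lambda_0$ of Lemma \ref{lemma:contop-bound}), while arranging the image of $m_{y,0}$ so that its contribution, and that of the remaining molecule $m_{x,y}$, does not exceed $\contopx$.

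Concretely, I would first set $Tm_{x,0}=\lambda_0\, m_{x,y}+(1-\lambda_0)m_{0,y}$ (or the analogous combination landing in the appropriate face $\face(\pm\pp_y)$), with $\lambda_0$ the optimal value from Lemma \ref{lemma:contop-bound}, so that the two nontrivial evaluations $\la\pp_0,Tm_{x,0}\ra$ and $\la\pp_x,Tm_{x,0}\ra$ both equal $\pm\contopx$ in absolute value and the third evaluation is $1$, placing the image on the correct face. Then I would define $Tm_{y,0}$ by a rescaled optimal combination — as in Proposition \ref{prop:constr-op-ratio}, a factor of the form $\frac{\contopx}{(\cdot)}$ times the optimal vector for $m_{y,0}$ — chosen so that $\nu(T,m_{y,0})=\contopx$ and so that $Tm_{y,0}$ remains in the ball. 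The admissibility of this rescaling hinges on the ordering relations from Section \ref{sec:metric-tools}: under the canonical assumption $\dxy\geq\ppx\geq\ppy$ together with $\contopx=\max\{\contopx,\rx\}$, Lemmas \ref{lemma:order-on-extremes}, \ref{lemma:contop-order}, and \ref{lemma:ratio-vs-contop} guarantee the relevant quotients are at most $1$, exactly as $\rx\leq\contopy$ was used in the ratio case.

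The final and most delicate step is to verify that the linearly-determined image $Tm_{x,y}=\frac{\ppx}{\dxy}Tm_{x,0}-\frac{\ppy}{\dxy}Tm_{y,0}$ satisfies both $\nu(T,m_{x,y})\leq\contopx$ and $\|Tm_{x,y}\|\leq 1$. I would evaluate $Tm_{x,y}$ against each of $\pp_x$, $\pp_y$, and $\pp_0$, using the prescribed values on $m_{x,0}$ and $m_{y,0}$, and reduce each inequality to an algebraic statement in the distances $\dxy,\ppx,\ppy$ and the Gromov products. Each such statement should simplify, after full expansion and factorization, either to the triangle inequality or to the governing hypothesis $\ppx\geq\frac{\dxy^2+\ppy^2}{\dxy+\ppy}$ supplied by Lemma \ref{lemma:ratio-vs-contop}. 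Combined with the lower bound $n(\free{M})\geq\max\{\contopx,\rx\}$ from Proposition \ref{prop:num-ind-optimal-bound}, the resulting operator forces $\nu(T)=\contopx$, and hence equality in the numerical index formula.

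The main obstacle I anticipate is purely computational rather than conceptual: confirming that the off-face evaluation $|\la\pp_0,Tm_{x,y}\ra|\leq 1$ (keeping the image inside the ball) does not impose a condition stronger than the ones already guaranteed by the canonical ordering. In the ratio case this reduced to $\frac{\ppx}{\dxy}\frac{\ppx+\ppy}{\dxy+\ppy}\leq 1$; here I would expect an analogous but heavier factorization involving the Gromov products, and the risk is that an intermediate rescaling factor such as $\frac{\contopx}{\contopy}$ interacts awkwardly with the denominators. The remedy is to exploit the same ordering lemmas from Section \ref{sec:metric-tools} that controlled the ratio construction, so that every constraint collapses to the triangle inequality or to the standing hypothesis.
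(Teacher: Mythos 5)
Your proposal follows essentially the same route as the paper's proof: send $m_{x,0}$ to the sphere via the optimal $\lambda$ of Lemma \ref{lemma:contop-bound} so that $\nu(T,m_{x,0})=\contopx$, define $Tm_{y,0}$ as the optimal vector for $m_{y,0}$ rescaled by $\frac{\contopx}{\contopy}$ (admissible since $\contopx\leq\contopy$ by Lemma \ref{lemma:contop-order}), and check the linearly-determined $Tm_{x,y}$ against $\pp_x$, $\pp_y$, $\pp_0$, with each inequality collapsing to the hypothesis $\rx\leq\contopx$, the triangle inequality, or $\rxy\geq\contopx$ from Lemma \ref{lemma:order-on-extremes}. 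The off-face evaluation you worried about turns out to be clean --- it equals exactly $\contopx/\rxy$, handled directly by Lemma \ref{lemma:order-on-extremes} --- so no heavier factorization is needed.
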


\begin{proof}
 Once again, we will construct the operator by giving the image of $m_{x,0}$ and  $m_{y,0}$.
 
 First, let 
 \begin{equation*}
  Tm_{x,0}= \lambda_x m_{y,x}+ (1-\lambda_x) m_{y,0}
 \end{equation*}
where $\lambda_x$ is the optimal $\lambda$ for $m_{x,0}$ (see Lemma \ref{lemma:contop-bound} or \ref{lemma:opt-y}). The analogous version of Lemma \ref{lemma:opt-y} for the molecule $m_{x,0}$ would provide that
\begin{align*}
\begin{split}
    \la \pp_0,Tm_{x,0} \ra&= \contopx;\\
      \la \pp_x,Tm_{x,0} \ra&= \contopx;\\
         \la \pp_y,Tm_{x,0} \ra&=-1.
    \end{split}
\end{align*}

So, $\nu(T,m_{x,0})=\contopx$, and $\|T m_{x,0}\|=1$.

 Now, take
\begin{equation*}
    Tm_{y,0}:= \dfrac{\contopx}{\contopy} (\lambda_y m_{y,x}- (1-\lambda_y)m_{x,0}).
\end{equation*}

This image is a scaling of the vector given in Lemma \ref{lemma:opt-y}, so we deduce that
\begin{equation*}
\begin{split}
    \la \pp_0,T m_{y,0} \ra&= -\contopx;\\
      \la \pp_y,T m_{y,0} \ra&= -\contopx;\\
         \la \pp_x,T m_{y,0} \ra&=\dfrac{\contopx}{\contopy}.
    \end{split}
\end{equation*}

Recall that the order relation for the optimal contributions is the reverse of the order relation for the distances---see Lemma \ref{lemma:contop-order}---so, $\contopx \leq \contopy$. Thus, ${\nu(T,m_{y,0})=\contopx}$, and $\|Tm_{y,0}\|\leq 1$.

Finally, the image of $m_{x,y}$ is completely determined by the linearity of $T$ as 
\begin{equation*}
    Tm_{x,y}= \frac{\ppx}{\dxy} Tm_{x,0} -  \frac{\ppy}{\dxy} Tm_{y,0}.
\end{equation*}

Then, we compute all the evaluations. First,
\begin{equation*}
\begin{split}
  - \la \pp_x, Tm_{x,y}\ra &= -\Big(\frac{\ppx}{\dxy} \la \pp_x, Tm_{x,0} \ra-  \frac{\ppy}{\dxy}\la \pp_x, Tm_{y,0}\ra\Big) \\
   &= -\frac{\ppx}{\dxy} \contopx +  \frac{\ppy}{\dxy}\frac{\contopx}{\contopy} \\
   &= \frac{\contopx}{\dxy} \Big( \frac{\ppy}{\contopy} -\ppx\Big) \leq \contopx
   \end{split}
\end{equation*}

Last inequality is deduced from the fact that $\contopy \geq \ry$---see Lemma \ref{lemma:order-on-extremes}---and this expression is equivalent to 
\begin{align*}
  \dfrac{\ppy}{\contopy}-\ppx \leq\dxy .
\end{align*}
 Analogously, 
\begin{equation*}
   \la \pp_x, Tm_{x,y}\ra \leq \frac{\ppx}{\dxy}\contopx \leq \contopx.
\end{equation*}
So, $| \la g_x, Tm_{x,y}\ra|\leq \contopx$.
 
 We proceed in a similar way with the functional $\pp_y$:
 \begin{equation*}
 \begin{split}
   -\la \pp_y, Tm_{x,y} \ra & =  -\Big(\frac{\ppx}{\dxy} \la \pp_y, Tm_{x,0} \ra-  \frac{\ppy}{\dxy}\la \pp_y, Tm_{y,0}\ra\Big)\\
   &= \frac{\ppx}{\dxy} - \frac{\ppy}{\dxy} \contopx,
    \end{split}
 \end{equation*}
 which is less than or equal to $\contopx$ if and only if 
 \begin{equation*}
     \frac{\ppx}{\dxy} \leq (1+ \frac{\ppy}{\dxy})\contopx,
 \end{equation*}
which is, once again, equivalent to the hypothesis $\rx \leq \contopx$.
  
  For the other inequality, we just check that
\begin{equation*}
   - \la \pp_y, Tm_{x,y} \ra = \frac{\ppx}{\dxy} - \frac{\ppy}{\dxy} \contopx
 \end{equation*}
 
is greater than or equal to $-\contopx$ if and only if 
 \begin{equation*}
 \begin{split}
     \frac{\ppx}{\dxy}&\geq \Big( \frac{\ppy}{\dxy} -1\Big) \contopx,
 \end{split}
 \end{equation*}
which is trivially satisfied since the left-hand side is positive while the right-hand side is non-positive. In conclusion, $|\la \pp_y, Tm_{x,y} \ra|\leq \contopx$, which implies that ${\nu(T,m_{x,y})\leq \contopx}$.

Finally, we compute the evaluation against the last functional $\pp_0$ in order to check that $Tm_{x,y}$ is inside the unit ball. 
\begin{equation*}
\begin{split}
   \la \pp_0, Tm_{x,y}\ra &= \frac{\ppx}{\dxy} \la \pp_0, Tm_{x,0} \ra-  \frac{\ppy}{\dxy}\la \pp_0, Tm_{y,0}\ra \\
   &= \frac{\ppx}{\dxy} \contopx+  \frac{\ppy}{\dxy} \contopx = \frac{\contopx}{\rxy}.
   \end{split}
\end{equation*}

Recall that under our assumptions $\rxy \geq \contopx$---see Lemma \ref{lemma:order-on-extremes}, this last quotient is less than or equal to $1$.  

This completes the construction of the operator $T$ with $\|T\|=1$ and ${\nu(T)=\contopx}$.
\end{proof}

\section{Some applications}\label{sec:app}

Now we are ready to state Theorem \ref{thm:num-ind-2D} that can be rewritten using the notation developed in previous sections.  

\begin{theorem}\label{thm:num-ind-2D-restated}
Let $M=(\{x,y,0\},d)$ be metric space of three points with $\dxy \geq \ppx \geq \ppy$.
Then:
\begin{itemize}
\item if $M$ is aligned, then ${n(\free{M})=1}$;
\item  otherwise, if $M$ is a triangle, then 
    \begin{equation*}
        n(\free{M})= \max \left\{  \contopx, \rx \right\}.
    \end{equation*}
\end{itemize}
\end{theorem}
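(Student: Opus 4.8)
The plan is to assemble the two halves already established in the preceding sections; almost no fresh computation is required, as the theorem is essentially a synthesis of earlier results together with the correct case analysis.

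First I would dispatch the aligned case. When $M$ is not a triangle, the discussion at the end of Section~\ref{sec:num-rad-behaviour} shows that one molecule lies in the convex hull of the other two, so $B_{\free{M}}$ degenerates from a hexagon into a rhombus and $\free{M}$ is isometrically isomorphic to $(\RR^2,\|\cdot\|_1)$. Since $n((\RR^2,\|\cdot\|_1))=1$ is classical, the first bullet follows at once, and from here on I assume $M$ to be a triangle.

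For the triangle case I would recall that $n(\free{M})=\inf\{\nu(T):T\in S_{\mathcal{L}(\free{M})}\}$, so the equality reduces to two matching inequalities. The lower bound $n(\free{M})\geq\max\{\contopx,\rx\}$ is precisely Proposition~\ref{prop:num-ind-optimal-bound}, which already exploits the ordering $\dxy\geq\ppx\geq\ppy$ and the comparison lemmas of Section~\ref{sec:metric-tools} to collapse the three-way minimum of Proposition~\ref{prop:num-ind-bound} into this single maximum; nothing further is needed in this direction. For the reverse inequality $n(\free{M})\leq\max\{\contopx,\rx\}$ I would exhibit one norm-one operator whose numerical radius equals the right-hand side, splitting according to which quantity realizes the maximum: if $\rx=\max\{\contopx,\rx\}$, then Proposition~\ref{prop:constr-op-ratio} furnishes $T\in S_{\mathcal{L}(\free{M})}$ with $\nu(T)=\rx$, whereas if $\contopx=\max\{\contopx,\rx\}$, then Proposition~\ref{prop:constr-op-contop} furnishes $T\in S_{\mathcal{L}(\free{M})}$ with $\nu(T)=\contopx$. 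In either branch $\nu(T)=\max\{\contopx,\rx\}$, so the infimum defining $n(\free{M})$ is at most this value, and combining the two inequalities gives the claimed equality.

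The only point requiring care is logical rather than analytic: one must confirm that the dichotomy $\rx\geq\contopx$ versus $\contopx\geq\rx$ is exhaustive (it trivially is) and that in each branch the construction invoked is the one whose hypothesis matches, so that the operator's radius genuinely equals the maximum and not merely one of the two candidates. I do not expect a real obstacle here, since the substantive work—bounding $\nu(T)$ from below for every norm-one operator, and the explicit constructions attaining that bound—has already been completed in Propositions~\ref{prop:num-ind-optimal-bound}, \ref{prop:constr-op-ratio}, and \ref{prop:constr-op-contop}.
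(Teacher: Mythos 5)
Your proposal is correct and follows exactly the route the paper intends: the aligned case via the rhombus/$\ell_1$ identification, the lower bound from Proposition~\ref{prop:num-ind-optimal-bound}, and the matching upper bound from the explicit operators of Propositions~\ref{prop:constr-op-ratio} and~\ref{prop:constr-op-contop} according to which quantity realizes the maximum. Nothing is missing.
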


\begin{corollary}\label{cor:aligned}
 Let $M$ a metric space of three points. Then, $n(\free{M})=1$ if and only if the three points of $M$ are aligned. 
\end{corollary}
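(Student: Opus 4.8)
The plan is to read the corollary off Theorem~\ref{thm:num-ind-2D-restated}, since after that theorem almost nothing remains to be done. Because the three points may be renamed at will, I would first fix the canonical ordering $\dxy \geq \ppx \geq \ppy$ with $z = 0$; this costs no generality. The forward implication is then immediate: if the three points are aligned, $M$ is not a triangle, and the first bullet of Theorem~\ref{thm:num-ind-2D-restated} gives $n(\free{M}) = 1$.

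For the converse I would argue by contraposition. Assuming $M$ is \emph{not} aligned---equivalently, a triangle, so that every Gromov product is strictly positive by Definition~\ref{def:non-aligned}---I must show $n(\free{M}) < 1$. The second bullet of the theorem gives $n(\free{M}) = \max\{\contopx, \rx\}$, so it suffices to check that \emph{both} of these quantities are strictly smaller than $1$, since $\max\{a,b\}<1$ holds exactly when $a<1$ and $b<1$.

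The metric ratio is the trivial half. Writing $\rx = \ppx/(\dxy + \ppy)$, the inequality $\rx < 1$ is just $\ppx < \dxy + \ppy$, that is $\grox > 0$, which holds precisely because $M$ is a triangle.

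The optimal contribution carries the one genuine computation, and this is the only step with any content. Substituting $\grox = \dxy + \ppy - \ppx$, $\groxy = \ppx + \ppy - \dxy$ and $\groy = \ppx + \dxy - \ppy$ into Definition~\ref{def:contop}, the inequality $\contopx < 1$ becomes the comparison
\[
\ppx\,\grox \;<\; \dxy\,\groxy + \ppy\,\groy .
\]
Expanding the difference of the two sides, everything telescopes to $\ppx^2 - (\dxy - \ppy)^2$, which factors exactly as $\groxy\,\groy$. Since $M$ is a triangle both factors are strictly positive, so the right-hand side exceeds the left-hand side and hence $\contopx < 1$. Combining the two estimates yields $\max\{\contopx, \rx\} < 1$, so $n(\free{M}) < 1$, which is the contrapositive we wanted. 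The only mildly delicate point is the factorization $\ppx^2 - (\dxy - \ppy)^2 = \groxy\,\groy$, but this is a routine algebraic identity; the substantive work is entirely absorbed into Theorem~\ref{thm:num-ind-2D-restated}.
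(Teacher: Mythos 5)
Your proposal is correct and follows the same route as the paper, which states the corollary as an immediate consequence of Theorem~\ref{thm:num-ind-2D-restated} without writing out the verification. The one computation you supply — that $\contopx<1$ reduces to $\ppx^2-(\dxy-\ppy)^2=\groxy\,\groy>0$ — checks out and correctly fills in the detail the paper leaves implicit.
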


Using this theorem it is easy to get some insights on the relation between the different types of triangles and the numerical index. The first thing to notice is that if $M$ is equilateral, then both values in the maximum are equal to $\frac{1}{2}$, and combining this with Proposition \ref{prop:2D-1/2}, we reach that the equilateral case is, between all the triangles, the only one providing the minimal numerical index.

\begin{corollary}\label{cor:equil}
 Let $M$ a metric space of three points. Then, $n(\free{M})=\frac{1}{2}$ if and only if $M$ is an equilateral triangle.
\end{corollary}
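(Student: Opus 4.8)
The plan is to deduce this equivalence directly from the two cornerstone results already in hand: Theorem \ref{thm:num-ind-2D-restated}, which furnishes the exact value of the numerical index, and Proposition \ref{prop:2D-1/2}, which supplies the lower bound $n(\free{M})\geq\frac12$ together with the rigidity statement that equality forces $M$ to be equilateral. Thus no new machinery is required; both implications reduce to quoting these results and performing one short computation. I would organize the argument as a clean ``if and only if''.

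For the forward implication I would argue as follows. Suppose $n(\free{M})=\frac12$. Since an aligned $M$ yields $n(\free{M})=1\neq\frac12$ by Corollary \ref{cor:aligned}, the hypothesis already forces $M$ to be a triangle. Proposition \ref{prop:2D-1/2} then applies verbatim: it asserts that the numerical index of $\free{M}$ is always at least $\frac12$ and that attaining this value is only possible when $M$ is equilateral. (Here one uses that in the finite-dimensional setting the infimum defining $n(\free{M})$ is attained, so that $n(\free{M})=\frac12$ genuinely realizes the equality case of the proposition.) Hence $M$ is an equilateral triangle.

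For the converse, assume $M$ is an equilateral triangle with common side length $a>0$, so that $\dxy=\ppx=\ppy=a$. The strict inequality $a<a+a$ shows that every Gromov product is positive, so $M$ is non-aligned and the triangle branch of Theorem \ref{thm:num-ind-2D-restated} applies, giving $n(\free{M})=\max\{\contopx,\rx\}$. Now every Gromov product equals $\gro_z(x,y)=a+a-a=a$, whence a direct substitution into Definition \ref{def:contop} yields
\[
\contopx=\frac{\ppx\,\grox}{\dxy\,\groxy+\ppy\,\groy}=\frac{a\cdot a}{a\cdot a+a\cdot a}=\frac12,
\]
while the metric ratio gives $\rx=\dfrac{\ppx}{\dxy+\ppy}=\dfrac{a}{a+a}=\dfrac12$. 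Therefore $n(\free{M})=\max\{\tfrac12,\tfrac12\}=\tfrac12$, completing the equivalence.

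Since the heavy lifting is already done in Proposition \ref{prop:2D-1/2} and Theorem \ref{thm:num-ind-2D-restated}, there is no substantive obstacle here. The only point meriting a moment of care is ensuring that the equilateral configuration is treated under the non-aligned (triangle) branch of Theorem \ref{thm:num-ind-2D-restated} rather than the degenerate branch, which the strict triangle inequality $a<2a$ guarantees; everything else is the one-line evaluation of $\contopx$ and $\rx$ at equal distances.
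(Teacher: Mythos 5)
Your proof is correct and follows essentially the same route as the paper: the converse direction is the direct computation that $\contopx=\rx=\frac12$ for an equilateral triangle combined with Theorem \ref{thm:num-ind-2D-restated}, and the forward direction is exactly the rigidity part of Proposition \ref{prop:2D-1/2}. Your added remark about the infimum being attained in finite dimensions is a reasonable (and correct) clarification of why the equality case of that proposition applies.
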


Assuming that $M$ is an isosceles triangle---i.e., two sides are equal---the formula is even simpler. Indeed, if $M=(\{x,y,0\},d)$ with $\ppx=\ppy$, then ${\contopx=\frac{\ppx}{3\ppx-\dxy}}$, $\rx= \frac{\ppx}{\ppx+\dxy}$. Moreover, ${\contopx\leq \rx}$ if and only if ${\dxy\leq \ppx}$, which allows as to give the following corollaries. 
%We then know the behavior of the numerical index for both types of isosceles triangles.

\begin{corollary}\label{cor:long-isos}
Let $M=(\{x,y,0\},d)$ be an isosceles triangle such that ${\ppx=\ppy\geq \dxy}$. Then then 
\[n(\free{M})= \frac{\ppx}{\ppx+\dxy}.\] 
In particular, $n(\free{M})\in [\frac{1}{2},1)$, and it can take any value of the interval.
\end{corollary}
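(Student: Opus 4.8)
The plan is to obtain the formula as a direct specialization of Theorem \ref{thm:num-ind-2D-restated}, combined with the elementary computations recorded immediately before the statement, and then to analyze the resulting one-parameter family. Writing $r:=\ppx=\ppy$ and $b:=\dxy$ with $r\ge b$, I would first check that $M$ is genuinely a triangle in the sense of Definition \ref{def:non-aligned}: all Gromov products are positive, since $\groxy=2r-b>0$ (as $b\le r<2r$) and $\grox=\groy=b>0$. Hence Theorem \ref{thm:num-ind-2D-restated} applies.

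The one point requiring care is that the labeling $\ppx=\ppy\ge\dxy$ is \emph{not} the canonical ordering $\dxy\ge\ppx\ge\ppy$ assumed in the theorem---here a leg, not the base, is a longest side. Since $\free{M}$, and therefore $n(\free{M})$, does not depend on the labeling or on the distinguished point, and since $\contopx$ and $\rx$ are attached to the intrinsic middle-length side (which in this configuration is a leg of length $r$), the theorem's identity $n(\free{M})=\max\{\contopx,\rx\}$ still holds verbatim; equivalently, one relabels to canonical form and verifies that the middle-side quantities are unchanged. Substituting $\ppx=\ppy$ then yields $\contopx=\frac{r}{3r-b}$ and $\rx=\frac{r}{r+b}$, exactly as stated before the corollary.

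Resolving the maximum is immediate: by the pre-statement equivalence $\contopx\le\rx\iff\dxy\le\ppx$ (equivalently $3r-b\ge r+b\iff r\ge b$), the hypothesis $r\ge b$ gives $\max\{\contopx,\rx\}=\rx$, so that
\[
    n(\free{M})=\rx=\frac{\ppx}{\ppx+\dxy}.
\]

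For the range I would set $t:=\dxy/\ppx$. Non-degeneracy together with the hypothesis force $t\in(0,1]$, and $t\mapsto\frac{1}{1+t}$ is a continuous, strictly decreasing bijection of $(0,1]$ onto $[\tfrac12,1)$: the value $\tfrac12$ is attained exactly at $t=1$ (the equilateral case, cf.\ Corollary \ref{cor:equil}), while the values tend to, but never reach, $1$ as $t\to0^+$. To see that every $c\in[\tfrac12,1)$ occurs, I would exhibit the isosceles triangle with $\ppx=\ppy=1$ and $\dxy=\frac{1-c}{c}\in(0,1]$, which is non-degenerate and gives $n(\free{M})=c$. Everything here is elementary; the only genuine subtlety is the labeling/canonical-ordering bookkeeping flagged above, which is the one place where an uncritical application of the theorem could fail.
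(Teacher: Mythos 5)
Your proposal is correct and follows essentially the same route as the paper: specialize Theorem \ref{thm:num-ind-2D-restated} to the isosceles case, compute $\contopx=\frac{\ppx}{3\ppx-\dxy}$ and $\rx=\frac{\ppx}{\ppx+\dxy}$, and use the equivalence $\contopx\leq\rx\iff\dxy\leq\ppx$ to resolve the maximum. Your extra care with the non-canonical labeling (the legs, not the base, are longest here) and your explicit parametrization $t=\dxy/\ppx\in(0,1]$ for the range claim are correct refinements of details the paper leaves implicit.
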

\begin{corollary}\label{cor:fat-isos}
Let $M=(\{x,y,0\},d)$ be an isosceles triangle such that ${\ppx=\ppy\leq \dxy}$. Then \[n(\free{M})= \frac{\ppx}{3\ppx-\dxy}.\] In particular, $n(\free{M})\in [\frac{1}{2},1)$, and it can take any value of the interval.
\end{corollary}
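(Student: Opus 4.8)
The plan is to read the result straight off Theorem~\ref{thm:num-ind-2D-restated} by specializing its formula to the symmetric case $\ppx=\ppy$. Since $M$ is assumed to be a triangle and the ordering $\dxy\geq\ppx=\ppy$ holds, the theorem applies verbatim and gives $n(\free{M})=\max\{\contopx,\rx\}$. Thus the whole argument reduces to three small tasks: (i) evaluating the two candidate quantities under the isosceles hypothesis, (ii) deciding which one dominates, and (iii) tracking the admissible range of the remaining free parameter $\dxy$.

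For step (i) I would substitute $\ppx=\ppy$ into the three relevant Gromov products. The cancellation of the $\ppx-\ppy$ terms makes them collapse to $\grox=\dxy$, $\groy=\dxy$, and $\groxy=2\ppx-\dxy$. Plugging these into Definition~\ref{def:contop} and into the metric ratio yields, after the routine simplification already recorded in the paragraph preceding the statement, $\contopx=\frac{\ppx}{3\ppx-\dxy}$ and $\rx=\frac{\ppx}{\ppx+\dxy}$.

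For step (ii), since both fractions share the positive numerator $\ppx$, comparing them amounts to comparing denominators: $\contopx\geq\rx$ holds exactly when $3\ppx-\dxy\leq\ppx+\dxy$, i.e.\ when $\dxy\geq\ppx$. This is precisely the fat-isosceles hypothesis, so the maximum is realized by the optimal contribution and $n(\free{M})=\contopx=\frac{\ppx}{3\ppx-\dxy}$.

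Finally, step (iii) fixes the range. Non-degeneracy of the triangle forces $\groxy=2\ppx-\dxy>0$, so $\dxy<2\ppx$; together with $\dxy\geq\ppx$ the parameter $\dxy$ ranges over $[\ppx,2\ppx)$. Over this interval $3\ppx-\dxy$ decreases continuously from $2\ppx$ (the equilateral endpoint, giving $\tfrac12$) down to $\ppx^{+}$ (the degenerate limit, giving $1^{-}$), so by the intermediate value theorem $\frac{\ppx}{3\ppx-\dxy}$ attains every value of $[\tfrac12,1)$. I expect no genuine obstacle here: the statement is a direct specialization of the main theorem, and the only point requiring care is keeping the strict constraint $\dxy<2\ppx$ in view, since it is what both guarantees $M$ remains a genuine triangle and certifies that the value $1$ is approached but never attained.
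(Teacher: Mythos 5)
Your proposal is correct and follows essentially the same route as the paper: the text preceding the corollary specializes Theorem~\ref{thm:num-ind-2D-restated} by substituting $\ppx=\ppy$ to get $\contopx=\frac{\ppx}{3\ppx-\dxy}$ and $\rx=\frac{\ppx}{\ppx+\dxy}$, and observes that $\contopx\geq\rx$ exactly when $\dxy\geq\ppx$. Your computation of the Gromov products, the comparison of denominators, and the range argument via $\dxy\in[\ppx,2\ppx)$ all match the paper's (largely implicit) verification.
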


M. Martin kindly pointed us to the relation between the numerical index of a $2$-dimensional Lipschitz-free space and \cite[Theorem 1]{MarMer07}. Using Corollaries \ref{cor:long-isos} and \ref{cor:fat-isos}, we can recover the numerical index of the family of hexagonal norms in \cite[Theorem 1]{MarMer07} and the other way around. It is easy to prove that a hexagonal norm in $\RR^2$ is covered by \cite[Theorem 1]{MarMer07} if and only if the space is isometrically isomorphic to $\free{M}$ for a certain isosceles triangle $M$.
\begin{remark}
\rm Notice that the formula obtained in Theorem \ref{thm:num-ind-2D-restated} allows to compute the numerical index for every $2$-dimensional hexagonal norm, as such a Banach space is isometrically isomorphic to $\free{M}$ for some triangle $M$. 
%Indeed, consider $(\RR^2,\nn_h)$ where $\nn_h$ is norm such that its unit ball $B_{\nn_h}$ is a hexagon. Take $e_1,e_2$ and $e_3$ the extreme points of $B_{\nn_h}$ such that $\conv(\{e_1,e_2\})\subset S_{\nn_h}$ and $\conv(\{e_2,e_3\})\subset S_{\nn_h}$. Notice that as $B_{\nn_h}$ is a---convex---hexagon, in particular $e_3=\aa e_1 + \bb e_2$ for some known coefficients $\aa,\bb \in \RR$---in particular, $\aa \neq 0 \neq \bb$. Now, by considering any $M=(\{x,y,z\},d)$ such that the relations
%\begin{align*}
%    \aa &= \frac{\dxz}{\dxy}\\
%    \bb &= - \frac{\dyz}{\dxy}
%\end{align*}
%are satisfied, then it follows that the operator $\phi:(\RR^2,\nn_h)\to \free{M}$ defined by $\phi(e_1):=m_{x,z}$ and $\phi(e_2):=m_{y,z}$ is a linear isometry.
\eor 
\end{remark}

%We are thankful to M. Martin, whose fruitful commentaries allowed us to realize about this fact.

We can even deduce some consequences for infinite-dimensional Lipschitz-free spaces. Considering $A$ a metric space and a subset $M\subset A$, then $\free{M}$ is canonically embedded as a linear subspace of $\free{A}$. A straightforward consequence of this fact is the following result.

\begin{theorem}\label{thm:R2-contains-all}
    Let $A\subset (\RR^2,\|\cdot\|_2)$ be any subset with non-empty interior. Then, $\free{A}$ is a separable infinite-dimensional Lipschitz-free space such that, for every $\aa \in [\frac{1}{2},1]$, it contains a $2$-dimensional subspace $Y_\aa$ with $n(Y_\aa)=\aa$.
\end{theorem}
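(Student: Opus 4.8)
The plan is to reduce the statement to the three-point case already resolved by Theorem~\ref{thm:num-ind-2D-restated} and its corollaries, and then to realize every target value by an explicit Euclidean triangle that fits inside $A$. First I would record the two qualitative assertions: since $A\subseteq(\RR^2,\|\cdot\|_2)$ is a separable metric space, $\free{A}$ is separable; and since $A$ has non-empty interior it is infinite, so $\free{A}$ is infinite-dimensional (it contains $\free{N}$ for finite subsets $N\subseteq A$ of arbitrarily large cardinality). The crucial structural input is that whenever $M\subseteq A$ is a subset endowed with the inherited Euclidean metric, $\free{M}$ embeds isometrically into $\free{A}$ as a linear subspace (a consequence of the McShane--Whitney extension of Lipschitz functions, which makes the restriction $\lip_0(A)\to\lip_0(M)$ a quotient map, dually an isometric embedding). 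Hence it suffices to produce, for each $\alpha\in[\tfrac12,1]$, a three-point subset $M=\{x,y,0\}\subseteq A$ with $n(\free{M})=\alpha$; the desired $Y_\alpha$ is then the image of $\free{M}$.

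Next I would build the required configurations using the isosceles formula. For $\alpha\in[\tfrac12,1)$ set $t:=\frac{1-\alpha}{\alpha}$, which ranges over $(0,1]$ as $\alpha$ ranges over $[\tfrac12,1)$. Place $x,y,0$ in the plane as an isosceles triangle with $\rho(x)=\rho(y)=r$ and $d(x,y)=tr$ for some $r>0$; since $t\le 1$ this satisfies $\rho(x)=\rho(y)\ge d(x,y)$ and is non-degenerate, so Corollary~\ref{cor:long-isos} applies and gives
\[
n(\free{M})=\frac{\rho(x)}{\rho(x)+d(x,y)}=\frac{1}{1+t}=\alpha .
\]
For the endpoint $\alpha=1$ I would instead take three distinct collinear points of $A$, which form an aligned metric space, so that $n(\free{M})=1$ by Corollary~\ref{cor:aligned}. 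Note that the value produced depends only on the ratio $t=d(x,y)/\rho(x)$ (equivalently, the numerical index is invariant under rescaling the metric, as the formula in Theorem~\ref{thm:num-ind-2D-restated} is homogeneous of degree zero in the distances), so the actual size of the triangle is at our disposal.

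Finally I would verify that these configurations genuinely sit inside $A$. Because $A$ has non-empty interior it contains an open Euclidean ball $B(c,\epsilon)$; choosing $r>0$ small enough (so that the diameter $\max\{r,tr\}=r$ of the triangle is less than $2\epsilon$) and translating the configuration into $B(c,\epsilon)$ places all three points inside $A$ while leaving the ratio $t$---hence the value of $n(\free{M})$---unchanged. Likewise a short segment inside $B(c,\epsilon)$ furnishes the three collinear points needed for $\alpha=1$. Combining the embedding of the first step with these realizations yields, for every $\alpha\in[\tfrac12,1]$, a $2$-dimensional subspace $Y_\alpha\subseteq\free{A}$ with $n(Y_\alpha)=\alpha$. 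The only genuinely non-routine point is the isometric embedding $\free{M}\hookrightarrow\free{A}$ for subsets of a common metric space; once this is granted---and it is standard for Lipschitz-free spaces---the rest is the elementary planar geometry above together with the already-established three-point formula.
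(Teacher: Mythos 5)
Your proof is correct and follows essentially the same route the paper intends: embed $\free{M}$ isometrically into $\free{A}$ for three-point subsets $M\subset A$, realize each $\alpha\in[\frac{1}{2},1)$ by a suitably scaled isosceles triangle placed inside an open ball contained in $A$ (and three collinear points for $\alpha=1$), and invoke Corollaries \ref{cor:long-isos} and \ref{cor:aligned}. The only microscopic slip is the parenthetical claim that diameter less than $2\epsilon$ suffices to fit the configuration inside $B(c,\epsilon)$ (an equilateral triangle of side $1.9\epsilon$ has circumradius exceeding $\epsilon$); taking $r<\epsilon$ fixes this and changes nothing else.
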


This may even allow some other application. For instance, as a consequence of the Theorem above and Corollary \ref{cor:fat-isos}, we are able to provide the---to our understanding, curious---result below. Its proof will appear elsewhere.

\begin{theorem}\label{thm-num-ind-dynamics}
Let $A\subset (\RR^2,\|\cdot\|_2)$ be any subset with non-empty interior. Then, there exist an operator $T\in \mathcal{L}(\free{A})$ and an uncountable infinite family of $2$-dimensional subspaces $\{Y_i:i\in I\}$ of $\free{A}$ such that, for every $i\in I$, the set $\{n(T^n(Y_i)):n \in \NN \}$ is dense in $[\frac{1}{2},1]$.
\end{theorem}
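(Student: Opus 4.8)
The plan is to realize the operator $T$ as the linearization $\hat\varphi$ of a carefully chosen Lipschitz self-map $\varphi$ of $A$, and to take the subspaces $Y_i$ of the form $\free{M_i}$ for suitable three-point subsets $M_i\subset A$. Then $T^n(Y_i)=\free{\varphi^n(M_i)}$ (as subspaces of $\free A$, whose subspace norm is the intrinsic free-space norm of the three-point set $\varphi^n(M_i)$), so by Theorem \ref{thm:num-ind-2D-restated} the number $n(T^n(Y_i))$ is a continuous function of the \emph{shape} of the triangle $\varphi^n(M_i)$. Since $A$ has non-empty interior it contains a closed disk $\bar B(c,r)$; I would fix the distinguished point $0_A:=c$ and seek $\varphi$ supported in $D:=B(c,r)$. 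The obstruction to keep in mind from the outset is that linear or affine maps are useless: for a linear $L$ the shape of $L^n(M)$ is governed by $L^n$ acting on the plane of edge vectors, so similarities keep the shape (hence the index) constant, area-preserving maps keep the area of $L^n(M)$ constant and thus keep the index bounded away from the degenerate value $1$, and $|\det L|\neq 1$ makes the shape evolve monotonically. In every case a dense orbit in $[\tfrac12,1]$ is impossible.

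To generate genuinely oscillating shape dynamics I would use a \emph{twist map}. Writing points of $D$ in polar coordinates $(s,\theta)$ centered at $c$, set $\varphi(s,\theta):=(s,\theta+g(s))$ on $\bar D$ and $\varphi:=\mathrm{id}$ on $A\setminus D$, where $g\colon[0,r]\to\RR$ is Lipschitz with $g(r)=0$ (so that $\varphi$ is a well-defined Lipschitz self-map of $A$ fixing $c$, whence $T=\hat\varphi\in\mathcal{L}(\free A)$ is bounded). This $\varphi$ preserves every circle about $c$, so each orbit stays inside $D\subset A$. For a triangle $M_i=\{p_1,p_2,p_3\}$ whose vertices lie on \emph{distinct} circles of radii $s_1,s_2,s_3$, the vertex $p_j$ stays at radius $s_j$ while its angle advances by $n\,g(s_j)$; distinctness of the radii guarantees the three orbit points remain distinct, so $\varphi^n(M_i)$ is always a genuine triangle. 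The only quantity that changes is the vector of angular gaps $\bigl(\Delta_{12}(n),\Delta_{13}(n)\bigr)$, which moves along the linear flow on the torus $\mathbb{T}^2=\RR^2/(2\pi\ZZ)^2$ with frequency vector $\bigl(g(s_1)-g(s_2),\,g(s_1)-g(s_3)\bigr)$.

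The main analytic input is then Weyl's equidistribution theorem: if I choose the radii so that $1,\ \tfrac{g(s_1)-g(s_2)}{2\pi},\ \tfrac{g(s_1)-g(s_3)}{2\pi}$ are rationally independent, the sequence $\bigl(\Delta_{12}(n),\Delta_{13}(n)\bigr)$ is dense in $\mathbb{T}^2$. Since the side lengths satisfy $|p_jp_k|^2=s_j^2+s_k^2-2s_js_k\cos\Delta_{jk}$, the shapes of $\varphi^n(M_i)$ are dense in the whole two-parameter family of triangles with prescribed vertex radii $s_1,s_2,s_3$, so the values $n(T^n(Y_i))$ are dense in the range of the index over that family. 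The crux is arranging this range to be \emph{all} of $[\tfrac12,1]$. I would pick the initial triangle $M_i$ genuinely equilateral (so that some gap configuration realizes the minimal value $\tfrac12$, by Corollary \ref{cor:equil}), while choosing $c$ off its circumcenter so that the three radii are distinct; the degenerate value $1$ is approached automatically, since as one vertex sweeps its circle it becomes collinear with the other two for some angle (giving $n\to 1$, by Corollary \ref{cor:aligned}). Connectedness of $\mathbb{T}^2$ and the intermediate value theorem then yield that the index attains every value of $[\tfrac12,1]$, and density of the torus orbit transfers to density of $\{n(T^n(Y_i)):n\in\NN\}$ in $[\tfrac12,1]$.

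For the uncountable family I would dilate the equilateral triangle by a scalar $t$ ranging over a short interval keeping it inside $\bar D$, obtaining subspaces $Y_t=\free{tM}$ with radii $t s_j$; choosing $g$ strictly convex, the rational-independence condition on the frequencies $g(ts_j)$ fails only on a countable set of $t$, leaving uncountably many admissible parameters. The hardest part, deserving the most care, is exactly this bookkeeping: simultaneously guaranteeing (i) that $\varphi$ is a self-map of the \emph{arbitrary} set $A$ (handled by localizing to $D$ and imposing $g(r)=0$), (ii) that for uncountably many $t$ the frequency vector is totally irrational so Weyl applies, and (iii) that the reachable shapes accumulate at both the equilateral and the aligned configurations, so that each individual dense orbit covers the full interval $[\tfrac12,1]$ rather than a proper subinterval.
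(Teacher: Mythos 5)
The paper does not actually prove Theorem \ref{thm-num-ind-dynamics}: it states only that ``its proof will appear elsewhere'' and hints that it follows from Theorem \ref{thm:R2-contains-all} and Corollary \ref{cor:fat-isos}, so there is no in-paper argument to compare yours against. Judged on its own, your strategy is sound and does yield the theorem: taking $T=\hat\varphi$ for a Lipschitz twist map supported on a disk $\bar B(c,r)\subset A$ and fixing the base point $c$, taking $Y_i\cong\free{M_i}$ for three-point sets whose vertices lie on distinct circles about $c$, identifying $T^n(Y_i)$ isometrically with $\free{\varphi^n(M_i)}$, and running Weyl equidistribution on the torus of angular gaps is a correct reduction. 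The surjectivity of the index onto $[\frac12,1]$ over each fixed-radii family (the equilateral configuration for $\frac12$, a collinear one for $1$, connectedness of $\mathbb{T}^2$ for everything in between) and the transfer of density from the torus orbit to the index values both work. Note that the paper's hint suggests its intended route uses only the isosceles formula of Corollary \ref{cor:fat-isos}; your route through general triangles leans harder on the full formula but is a legitimate alternative.

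Two points need tightening. First, you assert without proof that $n(\free{M})$ is a continuous function of the shape of the triangle; this is true but not automatic from the statement of Theorem \ref{thm:num-ind-2D-restated}, since the formula is stated under a labelling convention and the aligned case is treated separately. One must check (i) that the denominator $\dxy\, G_z(x,y)+\dyz\, G_x(y,z)$ cannot vanish on a genuine three-point space (both nonnegative summands vanish only if $\dxz=0$), (ii) that the formula is label-independent at ties $\dxy=\dxz$ or $\dxz=\dyz$, so the piecewise definition glues continuously, and (iii) that the formula tends to $1$ as $G_z(x,y)\to 0$, which it does since the first quotient tends to $\frac{\dxz\cdot 2\dyz}{\dyz\cdot 2\dxz}=1$. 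Second, your claim that strict convexity of $g$ alone makes the rational-dependence locus countable in $t$ is not quite right as stated: for fixed integers $(p,q,m)\neq(0,0,0)$ you need the map $t\mapsto (p+q)g(ts_1)-pg(ts_2)-qg(ts_3)$ to have countable level sets, and for a concrete choice such as $g(s)=s^2-r^2$ this map equals $t^2\bigl(p(s_1^2-s_2^2)+q(s_1^2-s_3^2)\bigr)$, which is identically zero for some $(p,q)\neq(0,0)$ unless $(s_1^2-s_2^2)/(s_1^2-s_3^2)$ is irrational. So you must impose one further (open, dense, easily satisfiable) condition on the placement of $c$ relative to the equilateral triangle; with that, the bad set of $t$ is a countable union of finite sets and the argument closes.
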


% \begin{proof}
%     As $A$ has non-empty interior, without loss of generality we can assume that the set $\mathfrak T$ is contained in $A$ (as there is a set which is bi-Lipschitz homeomorphic to it). Therefore, the function $f\colon \mathfrak T \to \mathfrak T$ from Theorem \ref{thm:num-ind-dynamics} can be extended to a Lipschitz function $\hat f\colon A \to A$---by Kirszbraun’s theorem, \cite[Theorem 2.10.43]{Federer}, this can be done even preserving the Lipschitz constant of $f$. Therefore, taking $T:= T_{\hat f}\in \mathcal{L} (\free{A})$ and $Y_i:= \free{M_i}$, where $M_i$ are the triangles in Theorem \ref{thm:num-ind-dynamics}, the proof is over.
% \end{proof}

 \section*{Acknowledgements}
The authors are grateful to M. Martin for his kind suggestions and commentaries, which improved the final version of the present document.
The authors were partially supported by the Universitat Polit\`ecnica de Val\`encia (Spain) and by grant PID2021-122126NB-C33 funded by MCIN/AEI/10.13039/ 501100011033 and by “ERDF A way of making Europe”. The first author was also supported by Generalitat Valenciana (through Project PROMETEU/2021/070 and the predoctoral fellowship CIACIF/2021/378) and by MCIN/AEI/10.13039/501100011033 (Project PID2019-105011GB). The second author was also supported by Fundación Séneca, Región de Murcia (Grant 19368/PI/14). The third author was also supported by AEI/FEDER (Project MTM2017-83262-C2-1-P of Ministerio de Economía y Competitividad).
 %Instituto Universitario de Matem\'atica Pura y Aplicada. Universitat Polit\`ecnica de Val\`encia (Spain). The authors were partially supported by the Universitat Polit\`ecnica de Val\`encia (Spain) and by grant PID2021-122126NB-C33 funded by MCIN/AEI/10.13039/501100011033 and by “ERDF A way of making Europe”. The first author was also supported by Generalitat Valenciana (through Project PROMETEU/2021/070 and the predoctoral fellowship CIACIF/2021/378) and by MCIN/AEI/10.13039/501100011033 (through Project PID2019-105011GB). The second author was also supported by Fundación Séneca, Región de Murcia (Grant 19368/PI/14). The third author was also supported by AEI/FEDER (Project MTM2017-83262-C2-1-P of Ministerio de Economía y Competitividad).

 \printbibliography

  \end{document}